\documentclass[a4paper,11pt]{amsart}

\usepackage{amsmath,amssymb,amsthm,mathtools,hyperref,geometry,cleveref}
\allowdisplaybreaks

\numberwithin{equation}{section}
\theoremstyle{plain}
\newtheorem{thm}{Theorem}[section]
\newtheorem{prop}[thm]{Proposition}
\newtheorem{lem}[thm]{Lemma}
\newtheorem{coro}[thm]{Corollary}

\newtheorem{conj}[thm]{Conjecture}

\newtheorem{rem}[thm]{Remark}

\newtheorem*{acknow}{Acknowledgments}
  

\def\tri{\triangle}
\def\l{\langle}
\def\r{\rangle}
\def\B{\mathcal{B}}
\def\C{\mathcal{C}}
\def\nn{\nabla}
\def\H{H_{\mathbb{R}}}
\def\A{A_{\mathbb{R}}}
\def\AA{\mathring{A}_{\mathbb{R}}}

\title[On Simon's third gap conjecture for minimal surfaces in spheres]{On Simon's third gap conjecture for minimal surfaces in spheres}

\author[W. R. Ding]{Weiran Ding$^{1}$}
\address{$^{1}$School of Mathematical Sciences, South China Normal University, Guangzhou 510000, P. R. CHINA.}
\email{dingwr0806@m.scnu.edu.cn}

\author[J. Q. Ge]{Jianquan Ge$^{2}$}
\address{$^{2}$School of Mathematical Sciences, Laboratory of Mathematics and Complex Systems, Beijing Normal University, Beijing 100875, P. R. CHINA.}
\email{jqge@bnu.edu.cn}

\author[F. G. Li]{Fagui Li$^{3,*}$}
\address{$^{3,*}$Frontier Interdisciplinary Domain, Beijing Institute of Technology, Zhuhai, Guangdong 519088, P. R. CHINA.}
\email{lifagui@bitzh.edu.cn}

\subjclass[2010]{53C24, 53C42.}
\date{}
\keywords{Simon's conjecture, minimal surface, positive curvature.}
\thanks{* the corresponding author.}
\thanks{J. Q. Ge is partially supported by NSFC (No. 12571049) and the Fundamental Research Funds for the Central Universities.}
\thanks{F. G. Li is partially supported by  NSFC (No. 12271040, 12501061) and Research Start up Funding of Beijing Institute of Technology (No. 5640011253301).}
\begin{document}
\maketitle

\begin{abstract}
	In this paper, continuing our previous work, we investigate the third gap problem in the Simon conjecture for closed minimal surfaces in the unit sphere. By developing refined third-order Simons-type integral identities and establishing new lower bounds for higher-order curvature terms, we obtain positive gap results throughout the entire interval $\left[\frac{5}{3},\frac{9}{5}\right]$ for the squared norm   of the second fundamental form, including the endpoint cases. %As a consequence, we provide a sharper description of the third gap phenomenon and make further progress toward the resolution of the Simon conjecture.
	 As an application, we establish a rigidity result for closed self-shrinkers.
\end{abstract}

\section{Introduction}
The study of rigidity and gap phenomena for minimal submanifolds in the unit sphere $\mathbb{S}^N$ has a long history and is closely related to several fundamental problems in differential geometry. In 1967, Calabi \cite{Calabi} studied minimal immersions of $\mathbb{S}^2$ with constant Gaussian curvature $K$ into $\mathbb{S}^N$. These immersions are classified up to a rigid motion with the curvature $K$ corresponding to the following values:
\begin{equation*}
	K=K(s)\coloneqq\frac{2}{s(s+1)},\quad s\in\mathbb{N}.
\end{equation*}
An influential problem in this topic is the Simon conjecture, proposed by U. Simon \cite{Simon,LS03} in 1980, concerning the quantization of the Gaussian curvature of closed minimal surfaces in the unit sphere.
\begin{conj}[Intrinsic version]
	Let $M$ be a closed surface minimally immersed in  $\mathbb{S}^N$ such that the image is not contained in any hyperplane of $\mathbb{R}^{N+1}$. If $K(s+1)\leq K\leq K(s)$ for an $s\in \mathbb{N}$, then either $K=K(s+1)$ or $K=K(s)$, and thus the immersion is one of Calabi's $2$-spheres with the dimension of the ambient space $N=2s+2$ or $N=2s$, respectively.
\end{conj}
For minimal surfaces in $\mathbb{S}^N$, the curvature $K$ and the squared norm $S=|h|^2$ of the second fundamental form $h$ are related as follows:
\begin{equation*}\label{KSeq}
	2K=2-S.
\end{equation*}
It follows that, by setting
\begin{equation*}
	S(s)\coloneqq\frac{2(s-1)(s+2)}{s(s+1)}=2-2K(s),
\end{equation*}
the Simon conjecture above can also be stated as:
\begin{conj}[Extrinsic version]
	Let $M$ be a closed surface minimally immersed in  $\mathbb{S}^N$ such that the image is not contained in any hyperplane of $\mathbb{R}^{N+1}$. If $S(s)\leq S\leq S(s+1)$ for an $s\in\mathbb{N}$, then either $S=S(s)$ or $S=S(s+1)$, and thus the immersion is one of Calabi's $2$-spheres with the dimension of the ambient space $N=2s$ or $N=2s+2$, respectively.
\end{conj}
The Simon conjecture is closely related to another rigidity problem named Lu's conjecture \cite{Lu}: \emph{Let $M^n$ be a closed immersed minimal submanifold of the unit sphere $\mathbb{S}^{n+q}$. Let $\lambda_1\ge\lambda_2\ge\cdots\ge\lambda_q$ be the eigenvalues of the fundamental matrix $A=(a_{\alpha\beta})=(\l S_\alpha,S_\beta\r)$, where $\{S_{\alpha}\}_{\alpha=1}^q$ are shape operators with respect to an orthonormal basis $\{\xi_\alpha\}_{\alpha=1}^q$. If $S+\lambda_2$ is a constant and if $S+\lambda_2>n$, then there is a constant $\epsilon(n,q)>0$ such that $S+\lambda_2>n+\epsilon(n,q)$.} For minimal $2$-spheres, and more generally for minimal surfaces under certain curvature inequalities about the normal scalar curvature, Ding-Ge-Li-Yang \cite{DGLY} gave the partial affirmative answer to the Lu conjecture for arbitrary codimension. The Lu conjecture can be viewed as a high-codimensional generalization of a classical gap theorem established by Peng and Terng \cite{PengT}. Indeed, the case $q=1$ was solved by Peng and Terng \cite{PengT}, which is related to the famous Chern conjecture (cf. \cite{CDK,Magl24,TWY,TY}): \emph{Let $M^n$ be a closed immersed minimal hypersurface of the unit sphere $\mathbb{S}^{n+1}$ with constant scalar curvature $R_M$. Then for each $n$, the set of all possible values for $R_M$ is discrete.}
A complete proof of the Chern conjecture remains elusive, though partial results are known in low dimensions or under additional curvature assumptions. The first gap theorem was established by Simons \cite{Simons}, who showed that if $0\le S\le n$, then either $S=0$ or $S=n$ identically on $M$. More recently, it was shown that for a certain class of austere submanifolds, the assumption $0<S\le n$ forces the submanifold to be a Clifford torus \cite{GeTaoZhou}. The high-codimensional formulation of Chern's conjecture likewise remains an open problem. For 2-dimensional minimal surfaces of constant curvature, Chern's conjecture was established by Calabi \cite{Calabi} for minimal 2-spheres and by Bryant \cite{Bryant} for general surfaces. To the best of our knowledge, for high-codimensional submanifolds of dimension three or higher, both Chern's conjecture and Lu's conjecture remain unresolved regarding the second gap. Subsequent work over several decades has addressed the second gap phenomenon, leading to numerous significant contributions (cf. \cite{Chen25,Ding-Xin,LeiXuXu,PengT,XuXu,Yang94,Yang-Cheng3}). We refer to the surveys \cite{GuXuXu,Yau} for a comprehensive overview and further references.

So far, the Simon conjecture has only been solved in the cases $s=1$ and $s=2$ \cite{BKSS,Simon}. To the best of our knowledge, the third gap problem in the Simon conjecture remains widely open, although there are indeed many partial results (cf. \cite{Bolt88a,Bolt88b,It88,Lizhenqi,Ok87,Sa98}) for the case $s\ge3$. In our previous work \cite{DGL}, we made  progress on the case $s=3$ in full generality. By establishing a sequence of Simons-type integral formulas up to third order, we obtained pinching rigidity results for closed minimal surfaces in spheres without imposing any additional assumptions on the normal bundle. Let $$S_{\min}\coloneqq\inf_{p\in M}S(p)\quad\text{and}\quad S_{\max}\coloneqq\sup_{p\in M}S(p),$$ we proved that the gap phenomenon holds throughout the open interval $\frac{5}{3}<S<\frac{9}{5}$, in the sense that a quantitative lower bound for $S_{\max}-S_{\min}$ can be derived \cite{DGL}:
\begin{thm}[Ding-Ge-Li \cite{DGL}]\label{SCMmain}
	Let $M$ be a closed surface minimally immersed in  $\mathbb{S}^N$.
	\begin{enumerate}
		\item\label{ob1} If $0\leq S\leq \frac{4}{3}$, then $S=0$ or $S=\frac{4}{3}$, and the submanifold is Calabi's $2$-sphere with curvature $K\equiv1$ or $K\equiv\frac{1}{3}$, respectively;
		\item\label{ob2} If $\frac{4}{3}\leq S\leq\frac{5}{3},$ then $S=\frac{4}{3}$ or $S=\frac{5}{3}$, and the submanifold is Calabi's $2$-sphere with curvature $K\equiv\frac{1}{3}$ or $K\equiv\frac{1}{6}$, respectively;
		\item\label{ob3} If $\frac{5}{3}\le S\le \frac{9}{5}$, then $$S_{\max}-S_{\min}\ge\frac{134-114S_{\min}+\sqrt{\mathcal{F}}}{108},$$ where $\mathcal{F}=(134-114S_{\min})^2+864(3S_{\min}-5)(9-5S_{\min})$.
	\end{enumerate}
\end{thm}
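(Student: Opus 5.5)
The plan is to use the classical machinery for minimal surfaces in $\mathbb{S}^N$: higher fundamental forms and Simons-type identities, integrated over $M$. Part (\ref{ob1}) is Simons' first gap. On a closed minimal surface, Simons' identity gives
\[
\tfrac12\Delta S=|\nabla h|^2+2S-\sum_{\alpha,\beta}\bigl(|[S_\alpha,S_\beta]|^2+\l S_\alpha,S_\beta\r^2\bigr).
\]
For $n=2$, the Kähler-type structure allows the quartic term to be rewritten exactly through $S^2$ and the normal curvature. Integrating, and using $|\nabla h|^2\ge0$ together with $2K=2-S$, forces $S\equiv0$ or $S\equiv\frac43$. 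Equality then makes $h$ parallel, so by Calabi's classification the surface is the Veronese sphere.

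For part (\ref{ob2}), I would pass to third order. Write the complexified fundamental forms $\phi_k$ of order $k+1$ in the isotropic frame, so that $S=2|\phi_1|^2$, with $|\phi_2|^2$ controlled by $|\nabla h|^2$. The next step is to compute $\Delta|\nabla h|^2$, which is the second Simons-type formula. Integrated, it gives $\int|\nabla^2h|^2$ plus a cubic expression in $S$ and $|\nabla h|^2$, combined with $\int|\nabla h|^2=\int S(\frac32S-2)+\ldots$ from the first identity. On $\frac43\le S\le\frac53$, the factor $(3S-4)(5-3S)\ge0$ appears with an integral constraint that forces it to vanish. Curvature constancy then follows, and Calabi again identifies the $K=\frac16$ sphere.

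For part (\ref{ob3}), I would repeat the scheme one order higher, computing $\Delta|\nabla^2 h|^2$ modulo lower-order terms. This produces an integral identity
\[
\int_M\Bigl[\,|\nabla^3h|^2+\Phi(S,|\nabla h|^2,|\nabla^2h|^2)\Bigr]=0
\]
together with the two previous identities. I would not assume pointwise constancy. Instead, I would bound $S$ between $S_{\min}$ and $S_{\max}$, and bound $\int|\nabla S|^2$ and $\int(S-\bar S)^2$ by $(S_{\max}-S_{\min})$ times integrals of $|\nabla h|^2$. I would then eliminate $\int|\nabla^2h|^2$ and $\int|\nabla h|^2$ between the three identities. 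The aim is an inequality of the form
\[
54\delta^2-(134-114S_{\min})\delta-4(3S_{\min}-5)(9-5S_{\min})\ge0,\qquad\delta=S_{\max}-S_{\min},
\]
where the positive root is exactly the stated bound, with discriminant $\mathcal F$.

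The main obstacle is the third-order step. It requires a sharp pointwise lower bound for the quartic and cubic normal-curvature terms in $\Phi$, which in higher codimension are not determined by $S$ alone. My first attempt would be to express them in terms of $|\phi_2|^2,|\phi_3|^2$ and use Cauchy–Schwarz $|\l\phi_1\phi_2,\phi_3\r|\le|\phi_1||\phi_2||\phi_3|$. I would then track constants so that the quadratic above comes out exactly.
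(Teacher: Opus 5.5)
Your part (1) is the standard Simons argument and is fine; all it needs is $|A|^2+\rho_0^\perp\le\frac{3}{2}S^2$. The genuine gap is in (2) and (3), at exactly the step you call the main obstacle. You treat the normal-curvature terms as not determined by $S$ and plan to bound them by Cauchy--Schwarz. The route the paper relies on (Lemma \ref{lemSCM} and Theorem \ref{SCMint}, from \cite{DGL}) instead shows they \emph{are} determined. Since $S\le\frac{9}{5}<2$, we have $K=1-\frac{S}{2}>0$, so $M$ is a topological sphere (or $\mathbb{RP}^2$; pass to the double cover). On a sphere all holomorphic differentials of the minimal immersion vanish. This gives the isotropy relations $\l a,b\r=0$, $|a|^2=|b|^2=\frac{1}{4}S$, $\l a_1,a_2\r=0$ and $|a_1|^2=|a_2|^2=\frac{1}{8}\B_1$. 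Hence $|A|^2=\frac{1}{2}S^2$, $\rho_0^\perp=S^2$, $\tri a=\frac{1}{2}(4-3S)a$, and, by the Ricci identity, $a_{21}-a_{12}=\frac{1}{2}(4-3S)b$. Because $a_{11}+a_{22}$ and $a_{21}-a_{12}$ are algebraic, the parallelogram law gives the pointwise identity $\B_2=\C_1+\frac{1}{4}S(3S-4)^2$ with $\C_1=2|a_{11}-a_{22}|^2+2|a_{12}+a_{21}|^2\ge0$. This lower bound for $|\nn^2h|^2$ is not a Kato-type inequality, and it is what turns the integrated second-order formula into $\int_M S(3S-4)(3S-5)=\int_M|\nn S|^2+2\int_M\C_1\ge0$, which settles (2). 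Your sketch of (2) never produces it. The same mechanism one order higher gives Theorem \ref{SCMint}(3) with remainder $2\C_2+2\C_3\ge0$. Without isotropy, $|\tri h|^2$ and the commutator terms are not functions of $S$. A lossy Cauchy--Schwarz estimate then has no reason to stay sharp at the Calabi values $\frac{5}{3}$ and $\frac{9}{5}$, where these remainders vanish identically.

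For (3), the quadratic you aim for is just the conclusion rewritten: its positive root is the stated bound, so nothing has been derived yet. Once Theorem \ref{SCMint}(3) is in hand, the rest is elementary, and your idea of controlling $\int_M|\nn S|^2$ by $\delta\int_M\B_1$ is the right kind of step.
\begin{enumerate}
\item Drop $2\C_2+2\C_3$.
\item Use $\int_M(\tri S)^2=\int_M[(17S-18)|\nn S|^2-2S^2(3S-4)(3S-5)+4S\C_1]$ together with $S\C_1\le S_{\max}\C_1$ and the second formula for $\int_M\C_1$. The resulting coefficient $\frac{19}{4}S+9-\frac{5}{2}S_{\max}$ of $|\nn S|^2$ is at most $\frac{9}{4}S_{\max}+9$.
\item From $\frac{1}{2}\tri S=\B_1-\frac{1}{2}S(3S-4)$, get $\int_M|\nn S|^2=\int_M(S_{\max}-S)\tri S\le2\delta\int_M\B_1=\delta\int_M S(3S-4)$.
\end{enumerate}
Together these give $\int_M S(3S-4)[(3S-5)(\frac{5}{2}S+\frac{5}{2}S_{\max}-9)+(\frac{9}{4}S_{\max}+9)\delta]\ge0$. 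Hence $(3S_{\min}-5)(5S_{\max}-9)+(\frac{9}{4}S_{\max}+9)\delta\ge0$. Since $\frac{9}{4}S_{\max}+9\le\frac{27}{2}S_{\max}-\frac{17}{2}$, this implies $(3S_{\min}-5)(5S_{\max}-9)+(\frac{27}{2}S_{\max}-\frac{17}{2})\delta\ge0$, which is exactly your quadratic.
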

However, the resulting gap degenerates at the endpoint values $S=\frac{5}{3}$ and $S=\frac{9}{5}$, and no positive gap can be inferred from those estimates alone. This degeneracy reflects an intrinsic limitation of the inequalities (see Theorem \ref{SCMint} (3)) used there:
\begin{equation*}
	\int_M S(3S-4)(3S-5)(5S-9)=\int_M\left[\frac{3}{2}(11S-21)|\nn S|^2-\frac{5}{4}(\tri S)^2+2\mathcal{C}_2+2\mathcal{C}_3\right],
\end{equation*}
where $\mathcal{C}_2$ and $\mathcal{C}_3$ are terms involving the third-order covariant derivatives of the second fundamental form, both of which are nonnegative.
In the final estimates in \cite{DGL}, the nonnegative higher-order terms $2\C_2+2\C_3$ were neglected, and the control of the Laplacian term $(\tri S)^2$ was not optimal. The purpose of the present paper is to overcome these limitations and to establish gap phenomena at both endpoint values, as well as to improve the gap size inside the open interval. Our approach is based on two new ingredients.
\begin{enumerate}
	\item The first new ingredient is a refined analysis of the third-order Simons-type identity. In the previous work, the nonnegative terms $\C_2+\C_3$, arising from the decomposition of the third covariant derivative of the second fundamental form, were simply dropped in the integral estimates. In this paper, we show that these terms admit a strictly positive lower bound under suitable curvature pinching conditions. More precisely, we prove that (see Lemma \ref{infc2c3})
		\begin{equation*}
			\C_2+\C_3\ge\frac{9}{8}S|\nabla S|^2.
		\end{equation*}
		This observation leads to a sharper inequality and plays a crucial role in preventing the degeneration of the gap at the endpoints.
	\item The second new ingredient is an improved estimate for the integral of $(\tri S)^2$. By introducing two auxiliary parameters (denoted here symbolically by $w$ and $t$), we optimize the balance between gradient and Laplacian terms, which yields a significantly stronger integral inequality than those previously available.
\end{enumerate}
By combining these two improvements, we derive a new Simons-type integral inequality that is strictly stronger than the one obtained in our earlier work (see Theorem \ref{newint}, \eqref{equation P parameters wt} and \eqref{equation 1.78 S 1.8 S}):
\begin{equation*}
	\begin{aligned}
		\int_M S(3S-4)(3S-5)(5S-9)\ge \int_M\left[\frac{3}{4}(25S-42)|\nabla S|^2-\frac{5}{4}(\tri S)^2\right]\ge\int_M\mathsf{P}.
	\end{aligned}
\end{equation*}
where $\mathsf{P}$ is a function of $S$ and the auxiliary parameters $w,t$. As a consequence, we establish the existence of positive gaps at both endpoint values $S=\frac{5}{3}$ and $S=\frac{9}{5}$, and at the same time enlarge the gap size throughout the open interval $\left(\frac{5}{3},\frac{9}{5}\right)$. These results provide a more complete picture of the third gap phenomenon in the Simon conjecture and represent a further step toward its full resolution.
\begin{thm}\label{mainthm}
	Let $M$ be a closed minimal surface immersed in $\mathbb{S}^N$. Then we have the following pinching results (see Theorems \ref{left}, \ref{middle} and \ref{right}).
	\begin{enumerate}
		\item\label{zuo} If $\frac{5}{3}\le S\le 1.7075<\frac{9}{5}$, then $S\equiv\frac{5}{3}$ and the submanifold is Calabi's $2$-sphere with curvature $K\equiv\frac{1}{6}$;
		\item\label{zhong} If $\frac{5}{3}\le S\leq \frac{9}{5}$ and $S\not \equiv\frac{5}{3}$, then
			\begin{equation*}
				S_{\max}-S_{\min} 
				\ge\frac{12S_{\min}\left( 9-5S_{\min}\right) (3S_{\min}-4)}
				{60S_{\min}(3S_{\min}-4)+5\left(\frac{19}{4}S_{\min}-\frac{9}{20}\right)^2};
			\end{equation*}
		\item\label{you} If $\frac{5}{3}<1.7853\le S\le\frac{9}{5}$, then $S\equiv\frac{9}{5}$ and the submanifold is Calabi's $2$-sphere with curvature $K\equiv\frac{1}{10}$.
	\end{enumerate}
\end{thm}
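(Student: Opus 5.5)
The plan is to start from the third-order Simons-type identity quoted in the introduction (Theorem \ref{SCMint} (3)),
\begin{equation*}
\int_M S(3S-4)(3S-5)(5S-9)=\int_M\Big[\tfrac{3}{2}(11S-21)|\nn S|^2-\tfrac{5}{4}(\tri S)^2+2\C_2+2\C_3\Big],
\end{equation*}
and to bound the right-hand side from below in two stages.

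\emph{Stage one: the higher-order terms.} Work in a local complex frame adapted to the higher normal bundles. Decompose the third covariant derivative of $h$ into its components along the successive normal spaces, and write $\C_2$ and $\C_3$ as sums of squared norms. Under $\frac53\le S\le\frac95$, a Cauchy--Schwarz argument comparing these components with $\nn S$ should give the lower bound $\C_2+\C_3\ge\frac98 S|\nn S|^2$ of Lemma \ref{infc2c3}. Substituting this yields
\begin{equation*}
\int_M S(3S-4)(3S-5)(5S-9)\ge\int_M\Big[\tfrac34(25S-42)|\nn S|^2-\tfrac54(\tri S)^2\Big].
\end{equation*}

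\emph{Stage two: removing the derivatives.} Next I would eliminate $(\tri S)^2$ and $|\nn S|^2$ in favour of a polynomial in $S$.

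\begin{itemize}
\item Use the second-order Simons identity, which expresses $\tri S$ as $2|\nn h|^2$ minus a polynomial $Q(S)$ plus a normal-curvature term. Squaring it, weighted by a parameter $w$, controls $\int(\tri S)^2$ from above by $\int(\tri S)\,(\text{poly in }S)$ plus gradient terms.
\item Integrate by parts: $\int f(S)\tri S=-\int f'(S)|\nn S|^2$.
\item Add $t$ times $0=\int\big(\tri S\cdot g(S)+g'(S)|\nn S|^2\big)$, together with the pointwise Kato-type bound $|\nn S|^2\le c\,S\,|\nn h|^2$ and the first-order identity linking $\int|\nn h|^2$ to $\int S(3S-4)$-type terms.
\end{itemize}

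This reduces everything to $\int_M\mathsf P(S;w,t)\le\int_M S(3S-4)(3S-5)(5S-9)$, where $\mathsf P$ contains a nonnegative multiple of $|\nn S|^2$ that can be dropped. The result is a purely algebraic inequality $\int_M F_{w,t}(S)\le0$.

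\emph{Extracting the gap.} For parts (1) and (3), I would choose $w$ and $t$ so that $F_{w,t}(S)>0$ on $[\frac53,1.7075]$, respectively $[1.7853,\frac95]$, except where $S$ equals the endpoint. That forces $S\equiv\frac53$ or $S\equiv\frac95$. Calabi's classification (constant curvature minimal spheres, with Bryant's theorem for general genus) then identifies the surface. Verifying this positivity is a numerical check of a quartic on an interval.

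For part (2), I would keep the $|\nn S|^2$ term and bound $\int|\nn S|^2$ from below by $\int(\tri S)^2/\lambda$-type arguments. Alternatively, note that $\int(S-S_{\min})$ and $\int(S_{\max}-S)$ are both controlled. Writing $S=S_{\min}+\delta$ with $0\le\delta\le S_{\max}-S_{\min}$ and linearizing $F$ gives
\begin{equation*}
0\ge\int F\ge\int\big[a(S_{\min})-b(S_{\min})(S_{\max}-S_{\min})\big],
\end{equation*}
with $a=12S(9-5S)(3S-4)$ and $b$ the stated denominator. The quadratic $(\frac{19}{4}S-\frac{9}{20})^2$ presumably comes from optimizing $t$ by completing the square.

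\emph{Main obstacle.} The hardest step is Stage one, the pointwise bound on $\C_2+\C_3$. It requires identifying exactly which components of $\nn^3h$ carry $\nn S$, and this uses the pinching range in an essential way. I would first try the structure equations in the Calabi-type frame, where $\nn S$ is expressed through $\l\nn h,h\r$, and apply Cauchy--Schwarz componentwise.
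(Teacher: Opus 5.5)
Your Stage one target is exactly Lemma~\ref{infc2c3}, and it needs only $K>0$, not the pinching range. Set $E_1=(\triangle a)_1+(a_{21}-a_{12})_2$ and $E_2=(\triangle a)_2-(a_{21}-a_{12})_1$. Then $\triangle a=\frac12(4-3S)a$ and the Ricci identity give $E_1=-\frac32(aS_1+bS_2)$ and $E_2=\frac32(bS_1-aS_2)$. Hence $\frac98 S|\nabla S|^2=|E_1|^2+|E_2|^2\le\mathcal{C}_2+\mathcal{C}_3$, using $|x+y|^2\le 2|x|^2+2|y|^2$.

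\textbf{The genuine gap is in Stage two.} Since $(\triangle S)^2$ enters with coefficient $-\frac54$, you need an \emph{upper} bound for $\int_M(\triangle S)^2$, and none of your tools gives one.
\begin{itemize}
\item Squaring $\triangle S=2\mathcal{B}_1-S(3S-4)$ is a tautology. The cross term $2\int_M\mathcal{B}_1\triangle S$ is neither a polynomial in $S$ times $\triangle S$ nor a gradient term. Substituting the identity back for $2\mathcal{B}_1$ just returns $\int_M(\triangle S)^2$.
\item The Kato bound only bounds $\mathcal{B}_1$ from below, and $\int_M\mathcal{B}_1=\frac12\int_M S(3S-4)$ controls neither $\int_M\mathcal{B}_1^2$ nor second derivatives of $S$.
\end{itemize}
The missing idea is one order higher. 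Write $\int_M\mathcal{B}_1\triangle S=\int_M S\triangle\mathcal{B}_1$ and insert the Bochner formula of Lemma~\ref{lemSCM}~\eqref{lapB1}, which brings in $\mathcal{B}_2=\mathcal{C}_1+\frac14S(3S-4)^2$. This gives
\[
\int_M(\triangle S)^2=\int_M\left[(17S-18)|\nabla S|^2-2S^2(3S-4)(3S-5)+4S\mathcal{C}_1\right].
\]
Then $\mathcal{C}_1\ge0$, $S\mathcal{C}_1\le S_{\max}\mathcal{C}_1$ and the second gap identity $\int_M\mathcal{C}_1=\int_M\left[\frac12S(3S-4)(3S-5)-\frac12|\nabla S|^2\right]$ yield an upper bound in which $S_{\max}$ appears.

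Only after this are the leftover gradient terms handled. Their coefficient $\frac34(25S-42)$ changes sign at $S=1.68$, so they cannot simply be dropped. Instead they are rewritten as $\int_M(w-S)(\cdots)\triangle S$ and absorbed by Cauchy--Schwarz and Young, with weight $t$, against $\int_M(\triangle S)^2$. That is what $w$ and $t$ are, not multiples of a vanishing integral.

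Consequently the final inequalities are not a fixed polynomial inequality $\int_M F_{w,t}(S)\le 0$; they carry $S_{\max}$, and the numerical thresholds come from that.
\begin{itemize}
\item For (1), the paper takes $t=\frac12$, $w=\frac53$ and uses $5S-9+\frac{5(S_{\max}-S)}{2(1-t)}\le 5S_{\max}-9$. Monotonicity arguments in $S$ then give $\Theta_1(S_{\max})\ge0$, whose root lies in $(1.7075,1.7076)$.
\item (2) is the same inequality \eqref{middleref} with $w=S_{\min}$, solved for $S_{\max}$ after using $S_{\max}\le\frac95$ and $w\ge\frac53$. Your ``linearization'', with $a$ and $b$ read off from the target formula, is not a derivation.
\item (3) uses a variant. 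The gradient coefficient is bounded by the constant $15tS_{\max}+\frac{36}{5}-\frac{126}{5}t$, then Lemma~\ref{cauchy} is applied with $w=\frac95$ and $t=\frac14$, giving $\Theta_2(S_{\min})\ge0$.
\end{itemize}
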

\begin{rem}
	Comparing Theorem \ref{mainthm} \eqref{zhong} with Theorem \ref{SCMmain} \eqref{ob3}, we see that the separation between $S_{\max}$ and $S_{\min}$ obtained here is considerably larger than that given previously, indicating a significantly stronger pinching rigidity in the case $s=3$.
\end{rem}

By Theorem \ref{mainthm}, we have
\begin{thm}\label{corollary oscillation}
	Let $M$ be a closed minimal surface immersed in $\mathbb{S}^N$ with $\frac{5}{3} \le S \le \frac{9}{5}$. If $$S_{\max}-S_{\min}\le\frac{1}{220},$$ then $S\equiv\frac{5}{3}$ or $S\equiv\frac{9}{5}$ and the submanifold is Calabi's $2$-sphere with curvature   $K\equiv\frac{1}{6}$ or $K\equiv\frac{1}{10}$, respectively.
\end{thm}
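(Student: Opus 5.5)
Theorem \ref{corollary oscillation} will be deduced directly from Theorem \ref{mainthm} by a case analysis on $S_{\min}$. Suppose $\frac53\le S\le\frac95$ and $S_{\max}-S_{\min}\le\frac1{220}$, and write $x=S_{\min}\in[\frac53,\frac95]$. There are three cases.

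If $x\le 1.7075-\frac1{220}$, then $S\le S_{\max}\le 1.7075$. Theorem \ref{mainthm} \eqref{zuo} then gives $S\equiv\frac53$.

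If $x\ge 1.7853$, then $1.7853\le S\le\frac95$. Theorem \ref{mainthm} \eqref{you} then gives $S\equiv\frac95$.

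In the remaining case $1.7075-\frac1{220}<x<1.7853$, we have $S\not\equiv\frac53$, and also $x<\frac95$. So Theorem \ref{mainthm} \eqref{zhong} applies and gives $S_{\max}-S_{\min}\ge g(x)$, where
\[
g(x)=\frac{12x(9-5x)(3x-4)}{60x(3x-4)+5\left(\frac{19}{4}x-\frac{9}{20}\right)^2}.
\]
It suffices to show $g(x)>\frac1{220}$ on this interval, which contradicts the hypothesis. This is equivalent to the polynomial inequality
\[
2640\,x(9-5x)(3x-4)>60x(3x-4)+5\left(\tfrac{19}{4}x-\tfrac{9}{20}\right)^2 .
\]
On the interval, $9-5x\ge 9-5(1.7853)=0.0735$ and $3x-4\ge 1.1089$, so the left side is at least $2640\cdot1.70\cdot0.0735\cdot1.1089\approx 366$. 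The right side is at most about $60\cdot1.7853\cdot1.356+5(8.03)^2\approx 145+323=468$.

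This crude endpoint bound does not quite suffice. The factor $9-5x$ is smallest at the right end, while the right side grows with $x$. The plan is therefore to evaluate more carefully. At $x=1.7853$: $g\approx \frac{12\cdot1.7853\cdot0.0735\cdot1.3559}{145.2+5\cdot(8.030)^2}\approx\frac{2.135}{467.6}\approx 0.00457>\frac1{220}\approx0.004545$. The margin is thin, which is presumably why the constants $1.7853$ and $\frac1{220}$ were chosen together.

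Next I will check that $g$ is minimized at the right endpoint of the interval. The numerator $x(9-5x)(3x-4)$ vanishes at $x=\frac95$ and is concave-ish near there, hence decreasing in $x$ on $[1.70,1.79]$. The denominator is increasing in $x$. So $g$ is decreasing there, and the minimum over the interval is at $x=1.7853$, which gives the bound above.

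The main obstacle is exactly this sharp numerical verification at $x\approx1.7853$, since the margin is about $2\times10^{-5}$. I would carry it out with exact rational arithmetic, clearing denominators, to avoid rounding errors. I would also verify the monotonicity of the numerator by checking that the derivative of $x(9-5x)(3x-4)=-15x^3+47x^2-36x$ is negative there: $-45x^2+94x-36<0$ for $x>1.6$.

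Finally, in the first two cases the conclusion that $M$ is Calabi's sphere with $K\equiv\frac16$ or $K\equiv\frac1{10}$ is already part of Theorem \ref{mainthm} \eqref{zuo} and \eqref{you}.
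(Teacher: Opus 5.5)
Your proposal is correct and follows essentially the paper's argument: $g$ (the paper's $y$) is decreasing, $g(1.7853)\approx 0.004565>\frac{1}{220}$, and parts \eqref{zhong} and \eqref{you} of Theorem~\ref{mainthm} finish the proof. Your separate first case via part \eqref{zuo} is harmless but unnecessary, because $g$ is decreasing on all of $[\frac53,\frac95]$, so $g(S_{\min})\ge g(1.7853)>\frac{1}{220}$ already holds for every $S_{\min}<1.7853$ with $S\not\equiv\frac53$; your monotonicity argument (a positive decreasing numerator over a positive increasing denominator) is a clean justification of the paper's bare claim $y'<0$.
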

\begin{rem}
	Theorem \ref{corollary oscillation} indicates that the gap phenomenon can be interpreted as a rigidity under small oscillation of the curvature quantity $S$. Meanwhile, Simon's third gap conjecture is equivalent to $	S_{\max}-S_{\min}\le\frac{2}{15}$ in Theorem \ref{corollary oscillation}.
\end{rem}

\begin{coro}\label{corollary no minimal immersion}
	Let $M$ be a closed minimal surface immersed in $\mathbb{S}^N$. Then there exists no isometric minimal immersion $\varPhi:M\to\mathbb{S}^N$ for any $N$ such that $\frac{5}{3}\le S\le\frac{9}{5}$, $S\not\equiv\frac{5}{3}$ and $$S_{\max}<\frac{108S_{\min}(3S_{\min}-4)+5S_{\min}\left(\frac{19}{4}S_{\min}-\frac{9}{20}\right)^2}{60S_{\min}(3S_{\min}-4)+5\left(\frac{19}{4}S_{\min}-\frac{9}{20}\right)^2}.$$
\end{coro}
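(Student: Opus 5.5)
The plan is to derive the corollary directly from Theorem \ref{mainthm} \eqref{zhong} by contradiction. Set
\begin{equation*}
D(S_{\min})\coloneqq 60S_{\min}(3S_{\min}-4)+5\left(\tfrac{19}{4}S_{\min}-\tfrac{9}{20}\right)^2 .
\end{equation*}
Suppose such an isometric minimal immersion $\varPhi$ exists with $\frac{5}{3}\le S\le\frac{9}{5}$ and $S\not\equiv\frac{5}{3}$. Then the hypotheses of Theorem \ref{mainthm} \eqref{zhong} hold. That theorem gives
\begin{equation*}
S_{\max}\ge S_{\min}+\frac{12S_{\min}(9-5S_{\min})(3S_{\min}-4)}{D(S_{\min})}.
\end{equation*}

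Next I will put the right-hand side over the common denominator $D(S_{\min})$. Since $S_{\min}\ge\frac{5}{3}>\frac{4}{3}$, we have $D(S_{\min})>0$, so the manipulation is legitimate. The numerator is
\begin{equation*}
S_{\min}D(S_{\min})+12S_{\min}(9-5S_{\min})(3S_{\min}-4).
\end{equation*}
Expanding $S_{\min}D(S_{\min})$ gives $60S_{\min}^2(3S_{\min}-4)+5S_{\min}\left(\frac{19}{4}S_{\min}-\frac{9}{20}\right)^2$.

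Combining the terms containing the factor $S_{\min}(3S_{\min}-4)$ is a one-line computation:
\begin{equation*}
60S_{\min}^2(3S_{\min}-4)+12S_{\min}(9-5S_{\min})(3S_{\min}-4)=S_{\min}(3S_{\min}-4)\bigl(60S_{\min}+108-60S_{\min}\bigr)=108S_{\min}(3S_{\min}-4).
\end{equation*}
So the numerator equals $108S_{\min}(3S_{\min}-4)+5S_{\min}\left(\frac{19}{4}S_{\min}-\frac{9}{20}\right)^2$. Hence the lower bound from Theorem \ref{mainthm} \eqref{zhong} is exactly the fraction in the statement. This contradicts the assumed strict inequality $S_{\max}<\cdots$, so no such immersion exists.

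There is no genuine obstacle here. The only points to check are the positivity of $D(S_{\min})$ and the cancellation of the $60S_{\min}$ terms.
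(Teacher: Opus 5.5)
Your proposal is correct and is essentially the paper's proof: the corollary is just the contrapositive of Theorem \ref{mainthm} \eqref{zhong}, and the paper simply asserts the contradiction. Your explicit check that $S_{\min}$ plus the gap equals the displayed fraction (using $D(S_{\min})>0$ and the cancellation of the $60S_{\min}$ terms) is exactly the algebra the paper leaves implicit; in fact, the proof of Theorem \ref{middle} obtains that very fraction as a lower bound for $S_{\max}$ before setting $w=S_{\min}$.
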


In the final section we consider the mean curvature flow for a submanifold $M^n$ in $\mathbb{R}^{N}$. Suppose $F:M\times[0,T)\to\mathbb{R}^{n+p}$ is a one-parameter family of smooth isometric immersions. If the position vector $F$ evolves in the direction of the mean curvature vector $\H$, this yields a solution of mean curvature flow:
\begin{equation*}
	\left\{
	\begin{aligned}
		\frac{\partial}{\partial t}F(x,t)&=\H(x,t),\quad x\in M\\
		F(x,0)&=F_0(x),
	\end{aligned}
	\right.
\end{equation*}
where $\H(x,t)$ is the mean curvature vector of $F_t(M)$, which is defined to be the trace of the second fundamental form $\A$, $F_t(x)=F(x,t)$ and $F_0$ is some given immersion. An immersion $F:M^n\to\mathbb{R}^{n+p}$ is called a self-shrinker if it satisfies
\begin{equation}\label{eq:ss-def}
	\H(x)=-\frac{1}{2}F(x)^{\perp}
\end{equation}
for all $x\in M$. We also use the traceless part of the second fundamental form
\begin{equation*}\label{eq:ss-tracefree}
	\AA=\A-\frac{1}{n}g\otimes\H,
\end{equation*}
where $g$ is the induced metric on $M$. Self-shrinkers play a fundamental role in the study of singularities of the mean curvature flow and have been extensively investigated. We refer to \cite{CaoL13,CaoX24,Cold13,Cold12,Ding14,Huis90,XuXu,Zhaoyuhang selfshrinkers25} for some representative works. In this paper, as an application of Theorem \ref{SCMmain} and Theorem \ref{mainthm}, we establish the following rigidity result for closed self-shrinkers.

\begin{thm}\label{thm:ss-three-gaps}
	Let $F:M^2\to\mathbb{R}^{2+p}$ be a closed self-shrinker of the mean curvature flow. Suppose the mean curvature is nowhere vanishing and the normalized mean curvature vector is parallel in the normal bundle.
	\begin{enumerate}
		\item If $$0\le|\AA|^2\le\frac{1}{3},$$ then $M$ is one of the following:
			\begin{enumerate}
				\item[(1a)] $|\AA|^2\equiv 0$ and $M$ is the round sphere $\mathbb{S}^2(2)\subset\mathbb{R}^3\subset\mathbb{R}^{2+p}$
				\item[(1b)] $|\AA|^2\equiv\frac{1}{3}$ and $M$ is the Veronese surface $\mathbb{S}^2(2\sqrt3)\to\mathbb{S}^4(2)\subset\mathbb{R}^5\subset\mathbb{R}^{2+p}$.
			\end{enumerate}
		\item If $$\frac{1}{3}\le|\AA|^2\le \frac{5}{12},$$ then $M$ is one of the following:
			\begin{enumerate}
				\item[(2a)] $|\AA|^2\equiv \frac{1}{3}$ and $M$ is the Veronese surface $\mathbb{S}^2(2\sqrt3)\to\mathbb{S}^4(2)\subset\mathbb{R}^5\subset\mathbb{R}^{2+p}$;
				\item[(2b)] $|\AA|^2\equiv \frac{5}{12}$ and $M$ is Calabi's $2$-sphere $\mathbb{S}^2(2\sqrt6)\to\mathbb{S}^6(2)\subset\mathbb{R}^7\subset\mathbb{R}^{2+p}$.
			\end{enumerate}	
		\item
			\begin{enumerate}
				\item[(3a)] If $$\frac{5}{12}\le|\AA|^2\le 0.426875,$$ then $|\AA|^2\equiv \frac{5}{12}$ and $M$ is Calabi's $2$-sphere $\mathbb{S}^2(2\sqrt6)\to\mathbb{S}^6(2)\subset\mathbb{R}^7\subset\mathbb{R}^{2+p}$.	
				\item[(3b)] If $$0.446325\le|\AA|^2\le \frac{9}{20},$$ then $|\AA|^2\equiv\frac{9}{20}$ and $M$ is Calabi's $2$-sphere $\mathbb{S}^2(2\sqrt{10})\to\mathbb{S}^8(2)\subset\mathbb{R}^9\subset\mathbb{R}^{2+p}$.
				\item[(3c)] If $$
				\frac{5}{12}\le|\AA|^2 \le\frac{9}{20} 
				\quad \text{and} \quad 
				  |\AA|^2_{\max}\le|\AA|^2_{\min}+\frac{1}{880},$$
				 then either $|\AA|^2\equiv\frac{5}{12}$ and $M$ is Calabi's $2$-sphere $\mathbb{S}^2(2\sqrt6)\to\mathbb{S}^6(2)\subset\mathbb{R}^7\subset\mathbb{R}^{2+p}$, or $|\AA|^2\equiv\frac{9}{20}$ and $M$ is Calabi's $2$-sphere $\mathbb{S}^2(2\sqrt{10})\to\mathbb{S}^8(2)\subset\mathbb{R}^9\subset\mathbb{R}^{2+p}$.
			\end{enumerate}
		\end{enumerate}
\end{thm}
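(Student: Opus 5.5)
The plan is to reduce the self-shrinker problem to the minimal-surface-in-sphere problem, and then read off each case from Theorem \ref{SCMmain} and Theorem \ref{mainthm}, Theorem \ref{corollary oscillation}.

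\textbf{Step 1: $M$ is minimal in a round sphere.} The mean curvature $\H$ is nowhere zero and $\nu=\H/|\H|$ is parallel in the normal bundle. Under these hypotheses the standard argument for self-shrinkers (Smoczyk's theorem for closed self-shrinkers with parallel principal normal) shows that $M$ is a minimal surface in the sphere $\mathbb{S}^{1+p}(2)\subset\mathbb{R}^{2+p}$ centred at the origin, of radius $\sqrt{2n}=2$.

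The key input is that, since $\nu$ is parallel, $\langle \A,\nu\rangle$ is a Codazzi tensor. The self-shrinker equation gives $\nabla |\H| = \frac12\langle \A(\cdot,F^T),\nu\rangle$-type identities. From these one shows $|F|^2$ is constant, i.e.\ $F^\perp=F$. Concretely, on a closed self-shrinker one has $\mathcal{L}|F|^2=2n-|F|^2$, with $\mathcal{L}$ the drift Laplacian. Combined with a maximum-principle or Simons-type argument for $|\H|$ and the parallel $\nu$, this forces $F^T=0$. Then $\H=-\frac12F$, and $M$ is minimal in $\mathbb{S}^{1+p}(2)$.

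I expect Step 1 to be the main obstacle, but it is a known result, so I would cite Smoczyk rather than reprove it.

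\textbf{Step 2: translate the curvature quantities.} Write $\mathbb{S}^{1+p}(2)=2\,\mathbb{S}^{1+p}(1)$. Let $h$ be the second fundamental form of the rescaled surface $\tilde M=\frac12 M$ in the unit sphere, and let $\tilde S=|h|^2$.

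The component of $\A$ along $\H=-\frac12F$ is the pure-trace term $\frac12 g$. Hence $\AA$ equals the second fundamental form of $M$ in $\mathbb{S}^{1+p}(2)$. Rescaling by a factor $2$ multiplies squared norms of second fundamental forms by $\frac14$, so
\[
|\AA|^2=\tfrac14\tilde S .
\]
This gives the following correspondences:
\begin{itemize}
\item $\tilde S\in\{0,\frac43,\frac53,\frac95\}$ corresponds to $|\AA|^2\in\{0,\frac13,\frac5{12},\frac9{20}\}$;
\item $\tilde S=1.7075$ corresponds to $|\AA|^2=0.426875$;
\item $\tilde S=1.7853$ corresponds to $|\AA|^2=0.446325$;
\item an oscillation $\frac1{220}$ in $\tilde S$ corresponds to an oscillation $\frac1{880}$ in $|\AA|^2$.
\end{itemize}

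\textbf{Step 3: apply the earlier theorems.} Case (1) follows from Theorem \ref{SCMmain}(\ref{ob1}) and case (2) from Theorem \ref{SCMmain}(\ref{ob2}). Cases (3a) and (3b) follow from Theorem \ref{mainthm}(\ref{zuo}) and (\ref{you}), and case (3c) follows from Theorem \ref{corollary oscillation}.

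The resulting Calabi spheres have constant curvature $K=1,\frac13,\frac16,\frac1{10}$ in the unit sphere, lying in $\mathbb{S}^2,\mathbb{S}^4,\mathbb{S}^6,\mathbb{S}^8$ respectively. Scaling by $2$ gives spheres of radius $2/\sqrt K$, namely $2,\,2\sqrt3,\,2\sqrt6,\,2\sqrt{10}$, contained in $\mathbb{S}^{2s}(2)\subset\mathbb{R}^{2s+1}\subset\mathbb{R}^{2+p}$.

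The nondegeneracy hypothesis (image not in a hyperplane) is not needed for these conclusions. Theorems \ref{SCMmain} and \ref{mainthm} are stated without it, and we simply pass to the smallest affine subspace containing the image.
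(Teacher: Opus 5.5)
Your proposal is correct and follows the paper's proof. You cite Smoczyk's theorem to place $M$ as a minimal surface in $\mathbb{S}^{1+p}(2)$, identify $|\AA|^2$ with the squared norm of the spherical second fundamental form, rescale to get $S=4|\AA|^2$, and read off each case from Theorems \ref{SCMmain}, \ref{mainthm} and \ref{corollary oscillation}. The only cosmetic difference is that you identify $\AA$ with the spherical second fundamental form directly, by splitting off the $\H$-component $\frac12 g$ of $\A$ and using that the remaining part is traceless by minimality; the paper instead obtains $|A|^2=|\AA|^2$ by comparing the two Gauss equations.
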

\begin{rem}
	Theorem \ref{thm:ss-three-gaps} $(1)$ and $(2)$ were proved by Cao, Xu and Zhao \cite{CaoX24}. The authors \cite{CaoX24} also raised the question of whether the assumptions that the mean curvature is nowhere vanishing and that the normalized mean curvature vector is parallel can be removed.
\end{rem}

\section{Preliminaries}
Let $M$ be a $2$-dimensional manifold immersed in the unit sphere $\mathbb{S}^N$. We assume the range of the indices as follows:
\begin{align*}
	1\leq i,j,k,&\cdots\leq 2,\\
	3\leq\alpha,\beta,\gamma,&\cdots\leq N,\\
	1\leq A,B,C,&\cdots\leq N.
\end{align*}
Let $(e_A)$ be a local orthonormal frame on $T(\mathbb{S}^N)$ such that, when restricted to $M$, $(e_i)$ and $(e_\alpha)$ lie in the tangent bundle $T(M)$ and normal bundle $T^{\bot}(M)$, respectively. We take $(\omega_A)$ and $(\omega_{AB})$ as the metric 1-form field and connection form field associated with $(e_A)$. Let $S_\alpha=(h_{ij}^\alpha)$, where $\omega_{i\alpha}=h_{ij}^\alpha\omega_j$. Then we have $h_{ij}^\alpha=h_{ji}^\alpha$. In the following, we will use the Einstein summation convention. The second fundamental form of $M$ is defined by $h=h_{ij}^\alpha \omega_i\omega_j e_\alpha.$ The mean curvature normal vector field is defined by $2H=h_{ii}^\alpha e_\alpha$. From now on, we assume that the $2$-dimensional manifold $M$ is minimally immersed in $\mathbb{S}^N$, that is to say, the mean curvature normal vector field of $M$ vanishes identically. Let $p=N-2$ be the codimension. Define column vectors
\begin{equation*}
	\begin{aligned}
		a&=(a^\alpha)\coloneqq(h^\alpha_{11})=(-h^\alpha_{22})\in\mathbb{R}^p,\\
		b&=(b^\alpha)\coloneqq(h^\alpha_{12})=(h^\alpha_{21})\in\mathbb{R}^p,\\
		a_i&=(a_i^\alpha)\coloneqq(h^\alpha_{11i})\in\mathbb{R}^p,\\
		a_{ij}&=(a_{ij}^\alpha)\coloneqq(h^\alpha_{11ij})\in\mathbb{R}^p,\\
		a_{ijk}&=(a_{ijk}^\alpha)\coloneqq(h^\alpha_{11ijk})\in\mathbb{R}^p,
	\end{aligned}
\end{equation*}
where the covariant derivatives $h_{ijk}^\alpha$, $h_{ijkl}^\alpha$, $h_{ijklm}^\alpha$ and $h_{ijklmn}^\alpha$ are defined as follows:
\begin{align*}
	h_{ijk}^\alpha\omega_k&=dh_{ij}^\alpha+h_{mj}^\alpha\omega_{mi}+h_{im}^\alpha\omega_{mj}+h_{ij}^\beta\omega_{\beta\alpha},\\
	h_{ijkl}^\alpha\omega_l&=dh_{ijk}^\alpha+h_{mjk}^\alpha\omega_{mi}+h_{imk}^\alpha\omega_{mj}+h_{ijm}^\alpha\omega_{mk}+h_{ijk}^\beta\omega_{\beta\alpha},\\
	h_{ijklm}^\alpha\omega_m&=dh_{ijkl}^\alpha+h_{njkl}^\alpha\omega_{ni}+h_{inkl}^\alpha\omega_{nj}+h_{ijnl}^\alpha\omega_{nk}+h_{ijkn}^\alpha\omega_{nl}+h_{ijkl}^\beta\omega_{\beta\alpha},\\
	h_{ijklmn}^\alpha\omega_n&=dh_{ijklm}^\alpha+h_{pjklm}^\alpha\omega_{pi}+h_{ipklm}^\alpha\omega_{pj}+h_{ijplm}^\alpha\omega_{pk}+h_{ijkpm}^\alpha\omega_{pl}+h_{ijklp}^\alpha\omega_{pm}+h_{ijklm}^\beta\omega_{\beta\alpha}.
\end{align*}
We also use the following notations: $$A\coloneqq(\l S_{\alpha},S_{\beta}\r)=2aa^{\text{T}}+2bb^{\text{T}},\quad S\coloneqq\text{tr}A=|h|^2,\quad\rho_0^{\perp}\coloneqq\sum_{\alpha,\beta}|[S_{\alpha},S_{\beta}]|^2.$$ 
Let
\begin{equation*}
	\begin{aligned}
		\mathcal{B}_1&\coloneqq|\nn h|^2=\sum_{i,j,k,\alpha}(h_{ijk}^\alpha)^2,\\
		\mathcal{B}_2&\coloneqq|\nn^2h|^2=\sum_{i,j,k,l,\alpha}(h_{ijkl}^\alpha)^2,
	\end{aligned}
\end{equation*}
and
\begin{equation*}
	\mathcal{B}_3\coloneqq|\nn^3h|^2=\sum_{i,j,k,l,m,\alpha}(h_{ijklm}^\alpha)^2
\end{equation*}
be the squared lengths of the first, second and third covariant derivatives of $h$, respectively. The Riemannian curvature tensor, the normal curvature tensor and the first covariant differentials of the normal curvature tensor are given by
\begin{align}
	R_{ijkl}&=\frac{1}{2}(2-S)(\delta_{ik}\delta_{jl}-\delta_{il}\delta_{jk}),\label{cur1}\\
	R_{\alpha\beta kl}&=h_{km}^\alpha h_{ml}^\beta-h_{km}^\beta h_{ml}^\alpha,\label{cur2}\\
	R_{\alpha\beta 12k}&=2(b^\beta a_k^\alpha+a^\alpha h_{12k}^\beta-b^\alpha a_k^\beta-a^\beta h_{12k}^\alpha).\label{cur3}
\end{align}
The Codazzi equation and the Ricci formulas are
\begin{align}
	h_{ijk}^\alpha-h_{ikj}^\alpha&=0,\label{Cod}\\
	h_{ijkl}^\alpha-h_{ijlk}^\alpha&=h_{pj}^\alpha R_{pikl}+h_{ip}^\alpha R_{pjkl}+h_{ij}^\beta R_{\beta\alpha kl},\label{Ric1}\\
	h_{ijklm}^\alpha-h_{ijkml}^\alpha&=h_{pjk}^\alpha R_{pilm}+h_{ipk}^\alpha R_{pjlm}+h_{ijp}^\alpha R_{pklm}+h_{ijk}^\beta R_{\beta\alpha lm},\label{Ric2}\\
	h_{ijklmn}^\alpha-h_{ijklnm}^\alpha&=h_{pjkl}^\alpha R_{pimn}+h_{ipkl}^\alpha R_{pjmn}+h_{ijpl}^\alpha R_{pkmn}+h_{ijkp}^\alpha R_{plmn}+h_{ijkl}^\beta R_{\beta\alpha mn}.\label{Ric3}
\end{align}
The Laplacians $\triangle h_{ij}^\alpha$, $\triangle h_{ijk}^\alpha$ and $\triangle h_{ijkl}^\alpha$ are defined by
\begin{equation*}
	\tri h_{ij}^\alpha=\sum_{k}h_{ijkk}^\alpha,\quad\tri h_{ijk}^\alpha=\sum_{l}h_{ijkll}^\alpha,\quad\tri h_{ijkl}^\alpha=\sum_{m}h_{ijklmm}^\alpha.
\end{equation*}
From \eqref{Cod}, \eqref{Ric1}, \eqref{Ric2} and \eqref{Ric3}, we obtain
\begin{equation*}
	\tri h_{ij}^\alpha =h_{mmij}^\alpha+h_{pi}^\alpha R_{pmjm}+h_{mp}^\alpha R_{pijm}+h_{mi}^\delta R_{\delta\alpha jm},
\end{equation*}
\begin{equation*}
	\begin{aligned}
		\tri h_{ijk}^\alpha &=(\tri h_{ij}^\alpha)_k+2h_{pjm}^\alpha R_{pikm}+2h_{ipm}^\alpha R_{pjkm}+h_{ijp}^\alpha R_{pmkm}+2h_{ijm}^\delta R_{\delta\alpha km}\\
		&\hspace{1.3em}+h_{pj}^\alpha R_{pikmm}+h_{ip}^\alpha R_{pjkmm}+h_{ij}^\delta R_{\delta\alpha kmm},
	\end{aligned}	
\end{equation*}
and
\begin{equation*}
	\begin{aligned}
		\tri h^\alpha_{ijkl}&=(\triangle h^\alpha_{ijk})_l+2h^\alpha_{pjkm}R_{pilm}+2h^\alpha_{ipkm}R_{pjlm}+2h^\alpha_{ijpm}R_{pklm}+h^\alpha_{ijkp}R_{pmlm}\\
			&\hspace{1.3em}+h^\alpha_{pjk}R_{pilmm}+h^\alpha_{ipk}R_{pjlmm}+h^\alpha_{ijp}R_{pklmm}+h^\beta_{ijk}R_{\beta\alpha lmm}+2h^\beta_{ijkm}R_{\beta\alpha lm}.
	\end{aligned}
\end{equation*}
The Simons identity \cite{Simons} for minimal surfaces in the unit sphere is
\begin{equation}\label{Sid}
	\frac{1}{2}\tri S=\B_1+2S-|A|^2-\rho_0^\perp.
\end{equation}
Furthermore, the following result is required.
\begin{thm}[Calabi \cite{Calabi}]\label{thm Calabi}
	Let $M$ be a $2$-sphere with a Riemannian metric of constant curvature $K$, and let $X:M\to r\mathbb{S}^{N}\subset\mathbb{R}^{N+1}~(N\ge2)$ be an isometric, minimal immersion of $M$ into the sphere with radius $r$, such that the image is not contained in any hyperplane of $\mathbb{R}^{N+1}$. Then
	\begin{enumerate}
 		\item The dimension $N$ is even, i.e., $N=2s$;
 		\item The value of $K$ is uniquely determined as $$K=\frac{2}{s(s+1)r^2}\eqqcolon K(s,r);$$
 		\item The immersion $X$ is uniquely determined up to a rigid rotation of $r\mathbb{S}^{N}$, and the $N$ components of the vector $X$ are a suitably normalized basis for the spherical harmonics of order $s$ on $M$.
	\end{enumerate}
\end{thm}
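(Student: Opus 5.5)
The plan follows Calabi's original argument. The first step is Takahashi's theorem, and the second is isotropy via holomorphic differentials on $\mathbb{S}^2$.

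\textbf{Step 1: the value of $K$.} Since $M$ is a $2$-sphere of constant curvature $K>0$, it is isometric to the round sphere of radius $1/\sqrt K$. Minimality of $X$ in $r\mathbb{S}^N$ is equivalent, by Takahashi's theorem, to
\[
\tri X=-\tfrac{2}{r^2}X .
\]
So every coordinate function of $X$ is an eigenfunction of the round Laplacian. Its eigenvalues are $s(s+1)K$ with $s\in\mathbb{N}$, and the eigenspace is the space $\mathcal{H}_s$ of spherical harmonics of order $s$, of dimension $2s+1$. Equating gives $2/r^2=s(s+1)K$, which is statement (2). Moreover the components of $X$ lie in $\mathcal{H}_s$. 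The linear full hypothesis then gives $N+1\le 2s+1$, i.e.\ $N\le 2s$.

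\textbf{Step 2: isotropy.} Use a local complex coordinate $z$ and the complexified Euclidean inner product, and consider the higher-order differentials
\[
\Phi_k=\l\partial^kX,\partial^kX\r\,dz^{2k},
\]
with $\partial$ taken along the harmonic (Frenet) sequence, i.e.\ the successive normal projections of $\partial^jX$. Harmonicity of $X$ plus the vanishing of the lower $\Phi_j$ make each $\Phi_k$ holomorphic, by induction on $k$. Since $\mathbb{S}^2$ carries no nonzero holomorphic differentials, all $\Phi_k$ vanish, so $X$ is totally isotropic.

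\textbf{Step 3: $N=2s$ (the main obstacle).} Isotropy yields an orthogonal decomposition of $\mathbb{C}^{N+1}$ into the complex lines spanned by $\partial^kX$ and their conjugates, together with $X$. Fullness forces $N$ to be even, $N=2m$. The constant curvature $K$ then enters through the recursive Plücker/Frenet formulas for the norms $|\partial^kX|^2$. Constancy of $K$ forces these norms to be constants determined by $K$ and $r$. Where the sequence terminates (the last term is antiholomorphic), one obtains $m(m+1)K r^2=2$. Comparing with Step 1 gives $m=s$, i.e.\ $N=2s$, which is (1). Carrying out this recursion carefully is the delicate part, and I would follow Calabi's computation here.

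\textbf{Step 4: uniqueness.} Once $N=2s$, fullness makes the components of $X$ a basis of $\mathcal{H}_s$. Write $X=T\circ Y$, where $Y$ is the standard equivariant immersion given by an $L^2$-orthonormal basis of $\mathcal{H}_s$ and $T$ is a linear map. The isometry condition together with total isotropy impose conditions on $T^{\mathsf T}T$. Invariance of $Y$ under $SO(3)$ and irreducibility of $\mathcal{H}_s$ (Schur's lemma), as in do Carmo–Wallach, force $T^{\mathsf T}T$ to be a multiple of the identity. Hence $T$ is a scaled rotation, with the scale fixed by $r$, which proves (3).
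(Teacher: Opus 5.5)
The paper gives no proof of this theorem; it is quoted from Calabi. Your proposal therefore has to stand on its own.

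Steps 1--3 are sound. Takahashi gives $2/r^2=s(s+1)K$ and $N\le 2s$. The induction making $\langle\partial^kX,\partial^kX\rangle\,dz^{2k}$ holomorphic is standard. The Frenet computation you defer does close up. Take $f_0=X$ and $\gamma_j=|f_{j+1}|^2/|f_j|^2$ along the harmonic sequence. Then $\gamma_{-1}=\gamma_0=|X_z|^2/r^2$, $\partial\bar\partial\log\gamma_0=-Kr^2\gamma_0$, and $\partial\bar\partial\log\gamma_j=\gamma_{j+1}-2\gamma_j+\gamma_{j-1}$. Hence $\gamma_j=\bigl(1-\tfrac{j(j+1)}{2}Kr^2\bigr)\gamma_0$, and $f_{m+1}=0$ gives $m(m+1)Kr^2=2$. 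Note that what is constant are these ratios, not the norms $|\partial^kX|^2$ as you wrote.

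The gap is in Step 4. Schur's lemma for the irreducible module $\mathcal{H}_s$ applies only to $SO(3)$-intertwiners, and nothing makes $T^{\mathsf T}T$ one. Normalize so that $|Y|^2\equiv r^2$ and set $C=T^{\mathsf T}T-I$. Your constraints only say that $C$ satisfies a family of linear conditions, and you never show that this system has only the trivial solution. Equivariance of $Y$ and irreducibility of $\mathcal{H}_s$ hold equally on $\mathbb{S}^n$ for $n\ge 3$. There do Carmo--Wallach produce non-congruent examples for $s\ge 4$, so these cannot be the facts doing the work.

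What you actually need is a fact specific to $n=2$: the map $C\mapsto Y^{\mathsf T}CY$ from symmetric matrices to functions on $\mathbb{S}^2$ is injective.
\begin{itemize}
\item Its target is $\mathcal{P}_{2s}(\mathbb{R}^3)|_{\mathbb{S}^2}=\mathcal{H}_0\oplus\mathcal{H}_2\oplus\cdots\oplus\mathcal{H}_{2s}$. This is multiplicity-free and has dimension $(s+1)(2s+1)=\dim\operatorname{Sym}^2(\mathcal{H}_s)$.
\item Being $SO(3)$-invariant, the image is a sum of some of these summands.
\item The image contains the square of the zonal harmonic $P_s(\langle x,e\rangle)$. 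Its $\mathcal{H}_{2j}$-component is nonzero for every $0\le j\le s$, because $P_s^2=\sum_j c_jP_{2j}$ with all $c_j>0$ (Adams' formula).
\item So the map is onto, and by the dimension count it is injective.
\end{itemize}

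With this lemma, $|X|^2\equiv r^2$ alone forces $T^{\mathsf T}T=I$ for $X=TY$ with $T:\mathbb{R}^{2s+1}\to\mathbb{R}^{N+1}$. Fullness then makes $T$ onto, so $N=2s$ and (3) follow directly from Step 1; Steps 2--3 become optional.

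Alternatively, you can stay on your route. By Step 3, the frame $(f_{-m},\dots,f_m)$ satisfies a linear system whose coefficients depend only on the conformal factor and on $Kr^2$. Hence two such immersions differ by a constant orthogonal matrix.
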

The immersion denoted by $\Psi_{2,s}:S^2(K(s))\to\mathbb{S}^{2s}$ is called Calabi's $2$-sphere, where $K(s)=K(s,1)$, $S^2(K(s))$ is the sphere with curvature $K(s)$, and $s=1,2,\cdots$. As we mentioned before, the Simon conjecture has only been solved in the cases $s=1$ and $s=2$. In the following, we recall some results from our previous paper \cite{DGL} without proof.
\begin{lem}[Ding-Ge-Li \cite{DGL}]\label{lemSCM}
	Let $M$ be a closed minimal surface immersed in $\mathbb{S}^N$ with positive Gaussian curvature. Then we have the following propositions.
	\begin{enumerate}
		\item\label{ab} The relationships between $a$ and $b$:
			\begin{equation*}
				\l a,b\r=0\quad\text{and}\quad|a|^2=|b|^2=\frac{1}{4}S;
			\end{equation*}
		\item\label{lap-ab} Laplacians of $a$ and $b$:
			\begin{equation*}
				\tri a=\frac{1}{2}a(4-3S)\quad\text{and}\quad\tri b=\frac{1}{2}b(4-3S);
			\end{equation*}
		\item\label{lap-ai} Laplacians of $a_1$ and $a_2$:
			\begin{equation*}
				\begin{aligned}
					\tri a_1&=\frac{1}{2}a_1(14-9S)+\frac{7}{4}(-aS_1+bS_2),\\
					\tri a_2&=\frac{1}{2}a_2(14-9S)-\frac{7}{4}(bS_1+aS_2);
				\end{aligned}
			\end{equation*}
		\item\label{shapenormal} The relationships between $S$, $|A|^2$ and $\rho^\perp$:
			\begin{equation*}
				|A|^2=\frac{1}{2}S^2\quad\text{and}\quad\rho_0^\perp=S^2;
			\end{equation*}
		\item\label{a1a2} The relationships between $a_1$ and $a_2$:
			\begin{equation*}
				\l a_1,a_2\r=0\quad\text{and}\quad|a_1|^2=|a_2|^2=\frac{1}{8}\mathcal{B}_1;
			\end{equation*}
		\item\label{inneraai} Inner products of $a,b$ and $a_i$:
			\begin{equation*}
				\begin{aligned}
					\l a,a_1\r&=\l b,a_2\r=\frac{1}{8}S_1,\\
					\l a,a_2\r&=-\l b,a_1\r=\frac{1}{8}S_2;
				\end{aligned}
			\end{equation*}
		\item\label{inneraaij} Inner products of $a,b$ and $a_{ij}$:
			\begin{equation*}
				\begin{aligned}
					\l a,a_{11}\r&=\l b,a_{21}\r=\frac{1}{8}(S_{11}-\mathcal{B}_1),\\
					\l a,a_{22}\r&=-\l b,a_{12}\r=\frac{1}{8}(S_{22}-\mathcal{B}_1),\\
					\l a,a_{12}\r&=\l b,a_{22}\r=\frac{1}{8}S_{12},\\
					\l a,a_{21}\r&=-\l b,a_{11}\r=\frac{1}{8}S_{21};
				\end{aligned}
			\end{equation*}
		\item\label{inneraiaij} Inner products of $a_i$ and $a_{ij}$:
			\begin{equation*}
				\begin{aligned}
					\l a_1,a_{22}\r&=-\l a_2,a_{12}\r=\frac{4-3S}{16}S_1-\frac{1}{16}(\B_1)_1,\\
					\l a_2,a_{11}\r&=-\l a_1,a_{21}\r=\frac{4-3S}{16}S_2-\frac{1}{16}(\B_1)_2,\\
					\l a_1,a_{11}\r&=\l a_2,a_{21}\r=\frac{1}{16}(\mathcal{B}_1)_1,\\
					\l a_2,a_{22}\r&=\l a_1,a_{12}\r=\frac{1}{16}(\mathcal{B}_1)_2;
				\end{aligned}
			\end{equation*}
		\item\label{aijaij} The relationships between $a_{11}$, $a_{12}$, $a_{21}$ and $a_{22}$:
			\begin{equation*}
				\begin{aligned}
					\l a_{11},a_{21}\r&=0\quad\text{and}\quad|a_{11}|^2=|a_{21}|^2=\frac{1}{16}\mathcal{B}_2-\frac{1}{32}(3S-4)(S_{11}-S_{22}),\\
					\l a_{22},a_{12}\r&=0\quad\text{and}\quad|a_{22}|^2=|a_{12}|^2=\frac{1}{16}\mathcal{B}_2+\frac{1}{32}(3S-4)(S_{11}-S_{22}).
				\end{aligned}
			\end{equation*}
		\item\label{lapB1} The Laplacian of $\B_1$:
			\begin{equation*}
				\frac{1}{2}\tri\B_1=\frac{7}{2}\tri S-\frac{9}{8}\tri S^2+\frac{1}{2}|\nn S|^2-\frac{1}{4}S(3S-4)(9S-14)+\B_2.
			\end{equation*}
		\item\label{lapB2} The Laplacian of $\B_2$:
			\begin{equation*}
				\begin{aligned}
					\frac{1}{2}\tri\mathcal{B}_2&=(h^\alpha_{ijkl}\tri h^\alpha_{ijk})_l-(21S^2-64S+49)\mathcal{B}_1+7(1-\frac{S}{2})\mathcal{B}_2+\frac{1}{4}S(3S-4)^2(7S-12)\\
					&\hspace{1.4em}-\frac{7}{2}(7S-8)|\nabla S|^2-\l\nabla\mathcal{B}_1,\nabla S\r+\frac{1}{4}(\tri S)^2-\frac{1}{2}|\text{\rm Hess}~S|^2+\mathcal{B}_3.
				\end{aligned}
			\end{equation*}
	\end{enumerate}
\end{lem}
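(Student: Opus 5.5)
The plan is to treat the Lemma as a sequence of algebraic and differential consequences of one geometric fact. A closed minimal surface with $K>0$ is, by Gauss--Bonnet, a topological sphere. On a sphere every holomorphic differential vanishes, and this will force the isotropy relations (\ref{ab}), (\ref{a1a2}) and (\ref{aijaij}). All the remaining items then follow by differentiation together with the Codazzi and Ricci identities \eqref{Cod}--\eqref{Ric3}.

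\emph{Step 1: isotropy.} Introduce a local isothermal coordinate $z$ and set $\phi=a-\sqrt{-1}\,b$, viewed as the component $h(\partial_z,\partial_z)$ up to a conformal factor. Minimality and Codazzi make $\langle\phi,\phi\rangle\,dz^4$ a holomorphic quartic differential. It therefore vanishes on $\mathbb{S}^2$, and its vanishing is exactly $\langle a,b\rangle=0$ and $|a|^2=|b|^2$. Combined with $S=2|a|^2+2|b|^2$ this gives (\ref{ab}).

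For (\ref{a1a2}), use Codazzi, which gives $b_1=a_2$ and $b_2=-a_1$ with $b_i=(h^\alpha_{12i})$. Set $\phi_1=a_1-\sqrt{-1}\,a_2$, essentially $\nabla_{\partial_z}\phi$ projected appropriately. Given the first isotropy, Calabi's construction shows that $\langle\phi_1,\phi_1\rangle\,dz^6$ (more precisely, the next higher-order differential) is holomorphic, so it also vanishes. This gives $\langle a_1,a_2\rangle=0$ and $|a_1|=|a_2|$. Since $\mathcal{B}_1$ consists of $8$ copies of $|a_1|^2$ by Codazzi symmetry, we get $|a_1|^2=\tfrac18\mathcal{B}_1$. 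Repeating once more yields the second-order relations $\langle a_{11},a_{21}\rangle=0$ and $|a_{11}|=|a_{21}|$. The correction term $\tfrac1{32}(3S-4)(S_{11}-S_{22})$ in (\ref{aijaij}) comes from the Ricci identity \eqref{Ric1}, which makes $a_{12}\neq a_{21}$. The total $|a_{11}|^2+|a_{12}|^2+|a_{21}|^2+|a_{22}|^2$ is matched against $\tfrac18\mathcal{B}_2$ by the analogous symmetry count.

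\emph{Step 2: the algebraic items.} These follow from Step 1 directly.
\begin{itemize}
\item For (\ref{shapenormal}): $|A|^2=\operatorname{tr}(2aa^{T}+2bb^{T})^2=4(|a|^4+|b|^4+2\langle a,b\rangle^2)=\tfrac12S^2$. Likewise $\rho_0^\perp=\sum|[S_\alpha,S_\beta]|^2=8(|a|^2|b|^2-\langle a,b\rangle^2)\cdot2=S^2$.
\item For (\ref{inneraai}): differentiate $|a|^2=|b|^2=\tfrac14 S$ and $\langle a,b\rangle=0$, then use $b_1=a_2$ and $b_2=-a_1$.
\item For (\ref{inneraaij}): differentiate the identities in (\ref{inneraai}) once more and use $|a_1|^2=|a_2|^2=\tfrac18\mathcal{B}_1$ and $\langle a_1,a_2\rangle=0$. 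This is where the $-\tfrac18\mathcal{B}_1$ appears.
\item For (\ref{inneraiaij}): differentiate (\ref{a1a2}), then swap indices with \eqref{Ric1}. The curvature term uses \eqref{cur1} and \eqref{cur2} with $K=1-\tfrac S2$, which produces $\tfrac{4-3S}{16}S_i$.
\end{itemize}

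\emph{Step 3: Laplacians.} Insert \eqref{cur1}--\eqref{cur3} into the general formulas for $\tri h^\alpha_{ij}$ and $\tri h^\alpha_{ijk}$ stated in the preliminaries, and simplify the normal-curvature terms with (\ref{ab}), (\ref{shapenormal}) and (\ref{inneraai}).
\begin{itemize}
\item The first-order formula gives (\ref{lap-ab}): $\tri a=\tfrac12 a(4-3S)$.
\item The second gives (\ref{lap-ai}). The terms $R_{\alpha\beta 12k}$ from \eqref{cur3} produce the $\tfrac74(-aS_1+bS_2)$ pieces.
\item For (\ref{lapB1}), compute $\tfrac12\tri\mathcal{B}_1=8\bigl(|\nabla a_1|^2+\langle a_1,\tri a_1\rangle\bigr)$. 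Use (\ref{lap-ai}) and (\ref{inneraai}), and identify $8|\nabla a_1|^2$ (plus the corrections from non-symmetric indices) with $\mathcal{B}_2$ minus terms in $S$ and $\nabla S$.
\item For (\ref{lapB2}), start from the Bochner-type identity $\tfrac12\tri\mathcal{B}_2=\mathcal{B}_3+h^\alpha_{ijkl}\tri h^\alpha_{ijkl}$. Substitute the third commutation formula, write the leading term as $(h^\alpha_{ijkl}\tri h^\alpha_{ijk})_l-|\tri h_{ijk}|^2$-type pieces, and reduce every quadratic expression in $a_i,a_{ij}$ with Step 2 and (\ref{aijaij}). The terms $(\tri S)^2$ and $|\mathrm{Hess}\,S|^2$ arise from $|a_{ij}|^2$-type sums.
\end{itemize}

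The main obstacles are the higher isotropy relations (\ref{a1a2}) and (\ref{aijaij}). Holomorphicity of the higher differentials must be checked inductively using the lower isotropy, which is Calabi's argument transcribed into the frame notation here. The lengthy bookkeeping in (\ref{lapB2}) is also hard: I would organise it entirely through the vectors $a_{ij}$ and verify it against Calabi's spheres, where $S$ is constant and $\mathcal{B}_k$ are known.
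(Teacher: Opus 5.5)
Your route is sound: isotropy from the vanishing of holomorphic differentials, followed by differentiation and the commutation formulas. It is the standard computation behind the lemma, which the paper imports from Ding--Ge--Li without proof. Items (1)--(8) and (10) come out as you describe, subject to two small corrections.

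\emph{Topology.} $K>0$ only gives $\chi(M)>0$, so $M$ may be $\mathbb{RP}^2$; the Veronese surface is an example. Either pass to the orientable double cover, or use $K>0$ directly. For a holomorphic $k$-differential $Q$ one has $\Delta\log|Q|_g^2=2kK>0$ away from its zeros, which is impossible at a maximum of $|Q|_g$, so $Q\equiv 0$.

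\emph{Item (3).} The coefficient $\tfrac74$ is not produced by \eqref{cur3} alone. It is assembled from four sources: $(\Delta h^\alpha_{11})_k$, the derivative of $K=1-\tfrac S2$, the term $2h^\delta_{ijm}R_{\delta\alpha km}$, and \eqref{cur3}, which contributes only $\tfrac14$.

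\emph{Item (9).} This is the real soft spot. ``Repeating once more'' gives the vanishing of the octic differential. That means isotropy of $\Phi=(a_{11}-a_{22})-\sqrt{-1}(a_{12}+a_{21})$, not of $a_{11}-\sqrt{-1}a_{21}$. To convert, you need the conjugate derivative.
\begin{itemize}
\item Item (2) and \eqref{Ric1} give $a_{11}+a_{22}=\tfrac{4-3S}{2}a$ and $a_{21}-a_{12}=\tfrac{4-3S}{2}b$.
\item Hence $\Psi=(a_{11}+a_{22})+\sqrt{-1}(a_{12}-a_{21})=\tfrac{4-3S}{2}(a-\sqrt{-1}b)$.
\item $\Psi$ is isotropic by (1), and it is bilinearly orthogonal to $\Phi$ by (7).
\item Therefore $\Phi+\Psi=2(a_{11}-\sqrt{-1}a_{21})$ and $\Phi-\Psi=-2(a_{22}+\sqrt{-1}a_{12})$ are both isotropic. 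This is exactly the orthogonality and equal-length content of (9).
\end{itemize}
Your symmetry count is also off: it gives $|a_{11}|^2+|a_{12}|^2+|a_{21}|^2+|a_{22}|^2=\tfrac14\mathcal{B}_2$, not $\tfrac18\mathcal{B}_2$. Combine this with $|a_{21}|^2-|a_{12}|^2=\tfrac{4-3S}{2}\langle b,a_{12}+a_{21}\rangle=\tfrac{4-3S}{16}(S_{11}-S_{22})$, which follows from (7). This yields exactly the stated values, and it confirms your attribution of the correction term to $a_{12}\neq a_{21}$.

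\emph{Item (11).} This is only outlined. Checking on Calabi's spheres tests only the $\nabla S$-free terms. The coefficients of $|\nabla S|^2$, $\langle\nabla\mathcal{B}_1,\nabla S\rangle$, $(\Delta S)^2$ and $|\mathrm{Hess}\,S|^2$ must still come out of the actual reduction. They arise mainly from the normal-curvature terms $2h^\beta_{ijkm}R_{\beta\alpha lm}h^\alpha_{ijkl}$ and $h^\beta_{ijk}R_{\beta\alpha lmm}h^\alpha_{ijkl}$, via items (6)--(8).
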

Using these calculations, the authors give the following formulas \cite{DGL}. The proofs are omitted here as they involve rather lengthy and technical calculations.
\begin{thm}[Ding-Ge-Li \cite{DGL}]\label{SCMint}
	Let $M$ be a closed minimal surface immersed in $\mathbb{S}^N$ with positive Gaussian curvature. Then we have the following formulas.
	\begin{enumerate}
		\item\label{1stint} The first gap integral formula
			\begin{equation*}
				\int_M S(3S-4)=2\int_M\mathcal{B}_1\ge0;
			\end{equation*}
		\item\label{2ndint} The second gap integral formula
			\begin{equation*}
				\begin{aligned}
					\int_M S(3S-4)(3S-5)&=2\int_M\left[\frac{1}{2}|\nn S|^2+\C_1\right]\\
					&=2\int_M\left[\mathcal{B}_2-\frac{1}{4}S(3S-4)^2+\frac{1}{2}|\nabla S|^2\right]\ge0,
				\end{aligned}
			\end{equation*}
			where $\C_1=2|a_{11}-a_{22}|^2+2|a_{12}+a_{21}|^2=\B_2-\frac{1}{4}S(3S-4)^2;$
		\item\label{3rdint} The third gap integral formula
			\begin{equation*}
				\begin{aligned}
					\int_M S(3S-4)(3S-5)(5S-9)&=\int_M\left[\frac{3}{2}(11S-21)|\nn S|^2-\frac{5}{4}(\tri S)^2+2\mathcal{C}_2+2\mathcal{C}_3\right]\\
					&=2\int_M\left[\mathcal{B}_3-\frac{1}{8}S(3S-4)(45S^2-144S+116)\right.\\
					&\hspace{4.5em}\left.+\frac{1}{8}(65S-166)|\nn S|^2-\frac{5}{8}(\tri S)^2\right],
				\end{aligned}
			\end{equation*}
			where $\mathcal{C}_2=2|a_{111}-a_{122}|^2+2|a_{211}-a_{222}|^2$, $\mathcal{C}_3=2|a_{112}+a_{121}|^2+2|a_{212}+a_{221}|^2$, and $\mathcal{C}_2+\mathcal{C}_3=\mathcal{B}_3-\frac{1}{4}(45S^2-144S+116)\mathcal{B}_1-\frac{13}{8}(7S-8)|\nabla S|^2$.
	\end{enumerate}
\end{thm}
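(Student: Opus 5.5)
The plan is to integrate the Simons-type Laplacian identities of Lemma \ref{lemSCM} over the closed surface $M$. Divergence terms such as $\tri S$, $\tri S^2$, $\tri\B_1$, $\tri\B_2$ and $(h^\alpha_{ijkl}\tri h^\alpha_{ijk})_l$ integrate to zero. The remaining derivative terms will be converted by integration by parts into expressions in $S$, $|\nn S|^2$ and $(\tri S)^2$. Separately, I would identify the quantities $\C_1$ and $\C_2+\C_3$ pointwise using the frame relations of Lemma \ref{lemSCM}.

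\emph{Formula (1).} Substitute $|A|^2=\frac12S^2$ and $\rho_0^\perp=S^2$ (Lemma \ref{lemSCM}\eqref{shapenormal}) into the Simons identity \eqref{Sid}. This gives
\[
\tfrac12\tri S=\B_1+2S-\tfrac32S^2 .
\]
Integrating yields $\int_M\B_1=\frac12\int_M S(3S-4)$. I will reuse this pointwise identity below to eliminate $\B_1$.

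\emph{Formula (2).} Integrate Lemma \ref{lemSCM}\eqref{lapB1}. The terms $\tri\B_1,\tri S,\tri S^2$ drop out, leaving
\[
\int_M\B_2=\int_M\Big[\tfrac14S(3S-4)(9S-14)-\tfrac12|\nn S|^2\Big].
\]
For the pointwise identity $\C_1=\B_2-\frac14S(3S-4)^2$:
\begin{itemize}
\item Write $\B_2$ in terms of the $a_{ij}$. By minimality, Codazzi and symmetry, $\B_2=8\sum|a_{ij}|^2$ up to the commutator corrections.
\item Expand $2|a_{11}-a_{22}|^2+2|a_{12}+a_{21}|^2$ using Lemma \ref{lemSCM}\eqref{aijaij}.
\item Compute the cross terms $\l a_{11},a_{22}\r$ and $\l a_{12},a_{21}\r$ from the Ricci formula \eqref{Ric1}. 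Here $a_{12}-a_{21}$ is expressed through $a,b$ and $R_{\alpha\beta12}$, and the resulting norms are evaluated by Lemma \ref{lemSCM}\eqref{ab}.
\end{itemize}
Combining this with the integral for $\B_2$, a short polynomial check gives $\int S(3S-4)(3S-5)=2\int[\frac12|\nn S|^2+\C_1]$.

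\emph{Formula (3).} Integrate Lemma \ref{lemSCM}\eqref{lapB2}, which expresses $\int\B_3$ through several terms. I would treat them as follows:
\begin{itemize}
\item $\int(21S^2-64S+49)\B_1$: replace $\B_1=\frac12\tri S-2S+\frac32S^2$ and use $\int f(S)\tri S=-\int f'(S)|\nn S|^2$.
\item $\int(1-\frac S2)\B_2$: I do not yet see how to reduce this. $\B_2$ is not given pointwise by $S$ and its derivatives, so I would have to derive a weighted analogue of the formula-(2) integration, for instance by integrating $\frac12 S\,\tri\B_1$ against Lemma \ref{lemSCM}\eqref{lapB1}, and hope the extra terms close up.
\item $-\int\l\nn\B_1,\nn S\r=\int\B_1\tri S$: substitute for $\B_1$ again, which gives $\int[\frac12(\tri S)^2-(2-3S)|\nn S|^2]$.
\item $\int|\mathrm{Hess}\,S|^2$: use Bochner's formula with Gaussian curvature $1-\frac S2$ to get $\int(\tri S)^2-\int(1-\frac S2)|\nn S|^2$.
\end{itemize}
Collecting terms should produce the second expression in (3), with the $(\tri S)^2$ coefficient $-\frac58\cdot 2$.

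The main obstacle is the pointwise identity
\[
\C_2+\C_3=\B_3-\tfrac14(45S^2-144S+116)\B_1-\tfrac{13}{8}(7S-8)|\nn S|^2 .
\]
This needs third-order analogues of Lemma \ref{lemSCM}\eqref{inneraaij}--\eqref{aijaij}. First, express all components $h^\alpha_{ijklm}$ via $a_{ijk}$, using Codazzi, the Ricci formula \eqref{Ric1} differentiated once, and \eqref{Ric2} with \eqref{cur1}--\eqref{cur3}. Then compute the cross terms $\l a_{111},a_{122}\r$, $\l a_{211},a_{222}\r$, $\l a_{112},a_{121}\r$ and $\l a_{212},a_{221}\r$. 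Each commutator difference, such as $a_{112}-a_{121}$, is a combination of $a_i$, $a,b$ and $S$, and its norm is evaluated with Lemma \ref{lemSCM}\eqref{a1a2}--\eqref{inneraai}. I expect this bookkeeping, including the derivative $R_{\alpha\beta12k}$ contributions, to be the lengthiest step. Once it is done, substituting it into the integrated identity gives the first expression in (3).
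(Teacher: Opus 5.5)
Your plan is correct. The paper only quotes this theorem from \cite{DGL} without proof, but your route is the one its recorded ingredients support: integrate Lemma \ref{lemSCM}\eqref{lapB1} and \eqref{lapB2}, eliminate $\B_1$ through $\frac12\tri S=\B_1-\frac12S(3S-4)$, use Bochner for $\int_M|\mathrm{Hess}\,S|^2$, and prove the pointwise identities for $\C_1$ and $\C_2+\C_3$. This route closes.

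\textbf{The step you were unsure of.} It works exactly as you suggest. Multiplying Lemma \ref{lemSCM}\eqref{lapB1} by $S$ and integrating gives
\[
\int_M S\B_2=\int_M\Big[\tfrac14(\tri S)^2+\tfrac14(18-17S)|\nn S|^2+\tfrac14S^2(3S-4)(9S-14)\Big],
\]
which is the paper's \eqref{int-(triS)^2}. Collecting all terms then gives
\[
\int_M\B_3=\int_M\Big[S(3S-4)\big(\tfrac{105}{8}S^2-44S+37\big)+\tfrac18(166-65S)|\nn S|^2+\tfrac58(\tri S)^2\Big],
\]
which is precisely the second form of \eqref{3rdint}.

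\textbf{Two slips to fix.}
\begin{enumerate}
\item $\B_2=4\sum_{i,j}|a_{ij}|^2$ exactly, not $8\sum_{i,j}|a_{ij}|^2$, and there are no commutator corrections. This is because $h_{ijkl}$ is symmetric in $i,j,k$ and trace-free in $i,j$; compare Lemma \ref{lemSCM}\eqref{aijaij}.
\item The correct sign is $\int_M\B_1\tri S=\int_M\big[\frac12(\tri S)^2+(2-3S)|\nn S|^2\big]$, with a plus in front of $(2-3S)$.
\end{enumerate}

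\textbf{Where the cross terms come from.} The cross terms $\l a_{11},a_{22}\r$, $\l a_{111},a_{122}\r$ and $\l a_{211},a_{222}\r$ do not come from commutators. They come from the Laplacians $a_{11}+a_{22}=\tri a$, $a_{111}+a_{122}=\tri a_1$ and $a_{211}+a_{222}=\tri a_2$ (Lemma \ref{lemSCM}\eqref{lap-ab}, \eqref{lap-ai}). By the parallelogram law,
\[
\C_2+\C_3=\B_3-2\big(|\tri a_1|^2+|\tri a_2|^2+|a_{112}-a_{121}|^2+|a_{212}-a_{221}|^2\big).
\]
The remaining two terms are genuine commutators, with $K=1-\frac S2$:
\[
a_{112}-a_{121}=-3Ka_2+\tfrac14(S_1b+S_2a),\qquad a_{212}-a_{221}=3Ka_1+\tfrac14(S_2b-S_1a).
\]
Lemma \ref{lemSCM}\eqref{ab}, \eqref{a1a2}, \eqref{inneraai} then turn the subtracted term into $\frac14(45S^2-144S+116)\B_1+\frac{13}{8}(7S-8)|\nn S|^2$, as claimed.
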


\section{Proofs of Theorem \ref{mainthm} and Theorem \ref{corollary oscillation}}
First, we need to establish the following lemmas.
\begin{lem}\label{cauchy}
	Let $M$ be a closed Riemannian manifold. Then for all constant $w\in\mathbb{R}$, we have
	\begin{equation*}
		\int_M|\nabla S|^2=-\int_M S\tri S=\int_M(w-S)\tri S\le\sqrt{\int_M(S-w)^2}\sqrt{\int_M(\tri S)^2}.
	\end{equation*}
\end{lem}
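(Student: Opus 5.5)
The plan is to obtain the two equalities from integration by parts (Green's formula on the closed manifold $M$) and the inequality from the Cauchy--Schwarz inequality in $L^2(M)$. Throughout, $S$ is a smooth function on the closed manifold $M$ (any smooth function works; only closedness, i.e.\ compactness without boundary, is used).

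For the first equality we integrate the divergence identity
\[
\operatorname{div}(S\nabla S)=|\nabla S|^2+S\tri S
\]
over $M$. Since $\partial M=\emptyset$, the divergence theorem gives $\int_M\operatorname{div}(S\nabla S)=0$, and hence $\int_M|\nabla S|^2=-\int_M S\tri S$.

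For the second equality, fix a constant $w\in\mathbb{R}$. Applying the divergence theorem to $\nabla S$ gives $\int_M\tri S=0$, so $\int_M w\tri S=0$. Adding this to the previous identity yields $-\int_M S\tri S=\int_M(w-S)\tri S$.

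For the inequality, we apply the Cauchy--Schwarz inequality in $L^2(M)$ to the functions $w-S$ and $\tri S$:
\[
\int_M(w-S)\tri S\le\Bigl(\int_M(w-S)^2\Bigr)^{1/2}\Bigl(\int_M(\tri S)^2\Bigr)^{1/2}.
\]
Since $(w-S)^2=(S-w)^2$, this is exactly the stated bound.

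There is no real obstacle in this argument. The only point to note is that both integrations by parts depend on $M$ having no boundary, and the statement supplies this by assuming $M$ is closed. The freedom to choose $w$ is what makes the lemma useful later: $w$ can be tuned, for instance toward a value near the average of $S$, to make $\int_M(S-w)^2$ small when $S$ oscillates little.
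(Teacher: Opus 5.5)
Your proof is correct and essentially the paper's: the paper gets the first equality by integrating $\triangle S^2=2S\triangle S+2|\nabla S|^2$ (the same as your $\operatorname{div}(S\nabla S)$ identity), the second from $w$ being constant (so $\int_M w\triangle S=0$), and the inequality from Cauchy--Schwarz.
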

\begin{proof}
	The first equality follows by
	\begin{equation*}
		\tri S^2=2S\tri S+2|\nn S|^2,
	\end{equation*}
	the second equality follows since $w\in\mathbb{R}$ is constant, and the last inequality follows by the Cauchy-Schwarz inequality.
\end{proof}
\begin{lem}\label{infc2c3}
	Let $M$ be a closed minimal surface immersed in $\mathbb{S}^N$ with positive Gaussian curvature. Then we have
	\begin{equation*}
		\C_2+\C_3\ge\frac{9}{8}S|\nabla S|^2.
	\end{equation*}
\end{lem}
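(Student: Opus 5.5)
Since $\C_2=2|a_{111}-a_{122}|^2+2|a_{211}-a_{222}|^2$ and $\C_3=2|a_{112}+a_{121}|^2+2|a_{212}+a_{221}|^2$ are sums of squared norms of vectors in $\mathbb{R}^p$, the plan is to bound each from below by its squared projection onto the two unit-length directions $a/|a|$ and $b/|b|$. By Lemma \ref{lemSCM} \eqref{ab} these directions are orthonormal after normalization, with $|a|^2=|b|^2=\frac14 S$. Bessel's inequality then gives, for any $v\in\mathbb{R}^p$,
\begin{equation*}
|v|^2\ge \frac{4}{S}\left(\l a,v\r^2+\l b,v\r^2\right).
\end{equation*}
So the whole task reduces to computing the inner products of $a,b$ with the third-order vectors $a_{ijk}$. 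We then check that the resulting quadratic form in $S_1,S_2$ is at least $\frac98 S|\nabla S|^2$.

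To compute these inner products, differentiate the identities of Lemma \ref{lemSCM} \eqref{inneraaij} once more:
\begin{equation*}
\l a,a_{ijk}\r=(\l a,a_{ij}\r)_k-\l a_k',a_{ij}\r,
\end{equation*}
where $a_k'$ denotes the covariant derivative of $a$, i.e.\ $a_1$ or $(h^\alpha_{111})$, and similarly for $b$ with $(h^\alpha_{12k})$. Codazzi and minimality give $h_{12k}$ in terms of $a_1,a_2$: $h_{121}=a_2$ and $h_{122}=-a_1$. The needed terms $\l a_k,a_{ij}\r$ are supplied by Lemma \ref{lemSCM} \eqref{inneraiaij}. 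In the combinations $a_{111}-a_{122}$, $a_{211}-a_{222}$, $a_{112}+a_{121}$ and $a_{212}+a_{221}$, I expect the Hessian terms $S_{ij}$ and the derivatives $(\B_1)_k$ to cancel, mostly through commutations via the Ricci identities \eqref{Ric1} and \eqref{Ric2} with $R_{1212}=\frac12(2-S)$. The normal curvature term $R_{\alpha\beta12}$ from \eqref{cur2}, paired with $a,b$, should contribute multiples of $S\,a$ and $S\,b$; this is where the factor $S$ multiplying $S_k$ should come from. The expected outcome is that each projection equals a constant times $S\,S_1$ or $S\,S_2$, for example $\l a,a_{111}-a_{122}\r=c\,S S_1$ with $c$ a constant, possibly plus harmless pieces.

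Summing the four contributions should give
\begin{equation*}
\C_2+\C_3\ge \frac{4}{S}\cdot 2\cdot (\text{const})\, S^2|\nabla S|^2,
\end{equation*}
and the constant should come out to $\frac98$. For instance, inner products equal to $\frac{3}{16}SS_k$, summed appropriately, give $\frac{8}{S}\cdot\frac{9}{256}S^2\cdot 4|\nabla S|^2=\frac{9}{8}S|\nabla S|^2$.

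The main obstacle is the bookkeeping in the second step. One must commute derivative indices correctly, since $a_{ijk}=h_{11ijk}$ and the last two indices do not commute. One must also correctly track the normal-curvature term, which involves $\l a,b\r=0$ and $|a|^2=|b|^2$. To keep this manageable, I would first try working in complex notation, with $a+\sqrt{-1}\,b$ and $\partial_z$. There the combinations in $\C_2,\C_3$ are the real and imaginary parts of a single fourth-order $\partial_z$-derivative, and the projection becomes one computation.
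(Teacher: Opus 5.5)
\textbf{There is a genuine gap: your key step is only conjectured, and the conjecture is false.}

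Your Bessel reduction onto $\mathrm{span}\{a,b\}$ is legitimate. But everything hinges on the values of the projections, and you only guess them. Run your own recipe: differentiate Lemma~\ref{lemSCM}\eqref{inneraaij}, subtract the terms from Lemma~\ref{lemSCM}\eqref{inneraiaij}, and eliminate $(\mathcal{B}_1)_1=\frac12(S_{111}+S_{221})+(3S-2)S_1$ using the Simons identity. You get
\begin{align*}
\langle a,a_{111}-a_{122}\rangle&=-\frac{3S}{16}S_1+Y,\qquad \langle a,a_{212}+a_{221}\rangle=-\frac{3S}{16}S_1-Y,\\
Y&=\frac1{16}\left(S_{111}-2S_{122}-S_{221}\right).
\end{align*}
Here $Y$ is, up to a factor, the real part of the $(3,0)$-component of $\nabla^3S$, and it has no reason to vanish. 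So the third-order terms do not cancel inside any single one of your four combinations, and there is no ``quadratic form in $S_1,S_2$'' to check. If you do not know how the remainders in different combinations are related, your Bessel bound contains cross terms like $S\,S_k\,Y$ of indefinite sign, and nothing follows.

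Your complex-notation picture is off for the same reason. $\mathcal{C}_2+\mathcal{C}_3$ is not the norm of a single $\partial_z$-derivative. It is the sum of the squared norms of two different type-components, $\nabla^3h(\partial,\partial,\partial,\partial,\partial)$ and $\nabla^3h(\partial,\partial,\partial,\bar\partial,\bar\partial)$. Only the second is lower order.

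\textbf{The missing idea is the pairing that makes $Y$ cancel.} Set
\[
E_1=(a_{111}-a_{122})+(a_{212}+a_{221}),\qquad E_2=(a_{112}+a_{121})-(a_{211}-a_{222}).
\]
These are lower order, since $E_1=(\triangle a)_1+(a_{21}-a_{12})_2$ and $E_2=(\triangle a)_2-(a_{21}-a_{12})_1$. Use Lemma~\ref{lemSCM}\eqref{lap-ab}, the Ricci identity $a_{21}-a_{12}=\frac{4-3S}{2}b$, and $b_1=a_2$, $b_2=-a_1$. This gives $E_1=-\frac32(aS_1+bS_2)$ and $E_2=\frac32(bS_1-aS_2)$, hence $|E_1|^2+|E_2|^2=\frac98S|\nabla S|^2$. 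The parallelogram law then yields
\[
\mathcal{C}_2+\mathcal{C}_3=|E_1|^2+|E_2|^2+|a_{111}-a_{122}-a_{212}-a_{221}|^2+|a_{211}-a_{222}+a_{112}+a_{121}|^2\ge\frac98S|\nabla S|^2.
\]
This is the paper's argument, phrased there via Cauchy--Schwarz. No projection is needed, because $E_1$ and $E_2$ already lie in $\mathrm{span}\{a,b\}$.

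Your Bessel route can be completed only by inserting exactly this step, $2|Px|^2+2|Py|^2\ge|P(x+y)|^2$. Once you do, your guessed value $\frac{3}{16}SS_k$ turns out to be correct as the average of the two paired projections.
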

\begin{proof}
	Using Lemma \ref{lemSCM} \eqref{lap-ab}, we obtain
	\begin{align*}
		(\tri a)_1&=\frac{1}{2}a_1(4-3S)+\frac{1}{2}a(-3S_1)=\frac{4-3S}{2}a_1-\frac{3}{2}aS_1,\\
		(\tri a)_2&=\frac{1}{2}a_2(4-3S)+\frac{1}{2}a(-3S_2)=\frac{4-3S}{2}a_2-\frac{3}{2}aS_2.
	\end{align*}
	Using \eqref{cur1}, \eqref{cur2}, \eqref{cur3} and \eqref{Ric1}, we obtain
	\begin{align*}
		(a_{21}-a_{12})_1&=\frac{4-3S}{2}a_2-\frac{3}{2}bS_1,\\
		(a_{21}-a_{12})_2&=-\frac{4-3S}{2}a_1-\frac{3}{2}bS_2.
	\end{align*}
	Define $E_1\coloneqq a_{111}-a_{122}+a_{212}+a_{221}$ and $E_2\coloneqq a_{112}+a_{121}+a_{222}-a_{211}$. Then we have
	\begin{align*}
		E_1&=(a_{11}+a_{22})_1+(a_{21}-a_{12})_2=-\frac{3}{2}(aS_1+bS_2),\\
		E_2&=(a_{11}+a_{22})_2-(a_{21}-a_{12})_1=\frac{3}{2}(bS_1-aS_2),
	\end{align*}
	which implies that $$|E_1|^2+|E_2|^2=\frac{9}{8}S|\nabla S|^2.$$ On the other hand, by a direct computation, we have
	\begin{align*}
		|E_1|^2+|E_2|^2&=\frac{1}{2}(\C_2+\C_3)+2\l a_{111}-a_{122},a_{212}+a_{221}\r+2\l a_{222}-a_{211},a_{112}+a_{121}\r\\
		&\le\frac{1}{2}(\C_2+\C_3)+2|a_{111}-a_{122}||a_{212}+a_{221}|+2|a_{222}-a_{211}||a_{112}+a_{121}|\\
		&\le\frac{1}{2}(\C_2+\C_3)+\frac{1}{2}(\C_2+\C_3)\\
		&=\C_2+\C_3.
	\end{align*}
	Therefore
	 $$\C_2+\C_3\ge\frac{9}{8}S|\nabla S|^2,$$
	which completes the proof.
\end{proof}
Combining Theorem \ref{SCMint} \eqref{3rdint} and Lemma \ref{infc2c3}, we obtain the following theorem.
\begin{thm}\label{newint}
	Let $M$ be a closed minimal surface immersed in $\mathbb{S}^N$ with positive Gaussian curvature. Then we have
	\begin{equation*}
		\begin{aligned}
			\int_M S(3S-4)(3S-5)(5S-9)\ge\int_M\left[\frac{3}{4}(25S-42)|\nabla S|^2-\frac{5}{4}(\tri S)^2\right].
	\end{aligned}
	\end{equation*}
\end{thm}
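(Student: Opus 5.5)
The plan is to start from the first expression of the third gap integral formula, Theorem \ref{SCMint} \eqref{3rdint}, which holds under the standing assumption of positive Gaussian curvature:
\begin{equation*}
\int_M S(3S-4)(3S-5)(5S-9)=\int_M\left[\frac{3}{2}(11S-21)|\nn S|^2-\frac{5}{4}(\tri S)^2+2\C_2+2\C_3\right].
\end{equation*}
The only term not already written in terms of $S$, $|\nn S|^2$ and $(\tri S)^2$ is the nonnegative higher-order term $2\C_2+2\C_3$. Previously it was simply dropped. Here I will instead bound it from below pointwise using Lemma \ref{infc2c3}, which gives
\begin{equation*}
2\C_2+2\C_3\ge\frac{9}{4}S|\nn S|^2 .
\end{equation*}
That lemma needs the same hypothesis, positive Gaussian curvature, so it applies directly.

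Substituting this bound into the integrand and integrating over the closed surface $M$ gives
\begin{equation*}
\int_M S(3S-4)(3S-5)(5S-9)\ge\int_M\left[\left(\frac{3}{2}(11S-21)+\frac{9}{4}S\right)|\nn S|^2-\frac{5}{4}(\tri S)^2\right].
\end{equation*}
It remains to simplify the coefficient of $|\nn S|^2$:
\begin{equation*}
\frac{33S-63}{2}+\frac{9S}{4}=\frac{66S-126+9S}{4}=\frac{75S-126}{4}=\frac{3}{4}(25S-42).
\end{equation*}
This is exactly the stated inequality.

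There is no real obstacle: the identity and the pointwise bound are both available earlier in the paper. The only points to verify are that the factor $2$ in front of $\C_2+\C_3$ is carried through correctly (giving $\tfrac{9}{4}$, not $\tfrac{9}{8}$) and the arithmetic of the coefficient above.
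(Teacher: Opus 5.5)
Your proposal is correct and is exactly the paper's argument: the paper derives this theorem in one line by inserting the pointwise bound $\C_2+\C_3\ge\frac{9}{8}S|\nn S|^2$ from Lemma \ref{infc2c3} into the first form of Theorem \ref{SCMint} \eqref{3rdint}. Your handling of the factor $2$ is right, as is the coefficient $\frac{3}{2}(11S-21)+\frac{9}{4}S=\frac{3}{4}(25S-42)$.
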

By Theorem \ref{newint}, we have the following lemma.
\begin{lem}\label{leftlem1}
	Let $M$ be a closed minimal surface immersed in $\mathbb{S}^N$ with positive Gaussian curvature. Then we have
	\begin{equation*}
		\begin{aligned}
			&\hspace{1.3em}\int_M S(3S-4)(3S-5)(5S-9)\\
			&\ge\int_M\left[-\frac{5(w-S)^2}{16t(1-t)}\left(\frac{2+15t}{2}(w+S)+\frac{36}{5}-2S_{\max}-\frac{126}{5}t\right)^2-\frac{5S(S_{\max}-S)(3S-4)(3S-5)}{2(1-t)}\right],
		\end{aligned}
	\end{equation*}
	for all $0<t<1$ and $w\in\mathbb{R}$.
\end{lem}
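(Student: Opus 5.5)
The plan is to start from Theorem \ref{newint} and reduce everything to a pointwise quadratic expression in $\tri S$, which can then be minimized by completing a square. Two devices do this: integration by parts against the constant $w$, as in Lemma \ref{cauchy}, and a second estimate for part of $\int_M(\tri S)^2$ that brings in $S_{\max}$. The shape of the target suggests this. The factor $(w-S)^2\big(\cdot\big)^2/(t(1-t))$ is what Young's inequality $-\alpha x^2+\beta x\ge -\beta^2/(4\alpha)$ produces. The term $-\frac{5S(S_{\max}-S)(3S-4)(3S-5)}{2(1-t)}$ suggests that a $(1-t)$-portion of the Laplacian term is traded for the nonnegative weight $S_{\max}-S$ times the second-gap integrand.

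\emph{Step 1 (gradient terms as Laplacian terms).} For any polynomial $g$ with $G'=g$, integrate by parts:
$$\int_M g(S)|\nn S|^2=\int_M\big(G(w)-G(S)\big)\tri S,$$
which holds because $\int_M\tri S=0$ and $\int_M G(S)\tri S=-\int_M g(S)|\nn S|^2$. If $g$ is linear, then $G(w)-G(S)=(w-S)\,\ell(w+S)$ with $\ell$ affine. For $g=\frac34(25S-42)$ this gives $(w-S)\cdot\frac34\big(\frac{25}{2}(w+S)-42\big)$. In the same way, any additional $|\nn S|^2$ terms with coefficients depending on $S$ or $S_{\max}$ that appear in Step 2 are absorbed into a single factor $(w-S)L(w,S)$. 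Step 3 should produce exactly $L=\frac{2+15t}{2}(w+S)+\frac{36}{5}-2S_{\max}-\frac{126}{5}t$, up to normalization.

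\emph{Step 2 (controlling a $(1-t)$-portion of $(\tri S)^2$).} Split $-\frac54(\tri S)^2=-\frac54 t(\tri S)^2-\frac54(1-t)(\tri S)^2$. For the second piece, use the Simons identity \eqref{Sid} together with Lemma \ref{lemSCM}\eqref{shapenormal}, which give $\tri S=2\B_1-S(3S-4)$. Hence
$$\int_M(\tri S)^2=\int_M\tri S\,\big(2\B_1-S(3S-4)\big)=-2\int_M\l\nn\B_1,\nn S\r+\int_M(6S-4)|\nn S|^2.$$
The term $\l\nn\B_1,\nn S\r$ is then eliminated by multiplying Lemma \ref{lemSCM}\eqref{lapB1} by $S_{\max}-S\ge0$ and integrating. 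Using $\B_2-\frac14S(3S-4)^2=\C_1\ge0$ from Theorem \ref{SCMint}\eqref{2ndint}, this should give an inequality of the form
$$\int_M(\tri S)^2\le \int_M 2S(S_{\max}-S)(3S-4)(3S-5)+\int_M q(S,S_{\max})|\nn S|^2,$$
with $q$ linear in $S$. Dividing by the appropriate power of $(1-t)$ accounts for the $\frac{1}{1-t}$ weight in front of $S(S_{\max}-S)(3S-4)(3S-5)$ in the target.

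\emph{Step 3 (completing the square).} After Steps 1--2 the right-hand side has the form $\int_M\big[-\alpha(\tri S)^2+(w-S)L\,\tri S\big]$ plus the $S_{\max}$ term, with $\alpha$ proportional to $t(1-t)$. Pointwise Young's inequality gives $-\alpha x^2+\beta x\ge-\beta^2/(4\alpha)$, which yields the stated $-\frac{5(w-S)^2}{16t(1-t)}L^2$.

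The main obstacle is Step 2. One has to find the right weighted version of the $\B_1$-Laplacian identity so that the leftover gradient coefficient, combined with $\frac34(25S-42)$ and the $t$-splitting, produces exactly the stated affine factor in $w+S$, $S_{\max}$ and $t$. I would first try the weight $S_{\max}-S$ as above, and then adjust the splitting constants, possibly rescaling $t$ and using $\C_1\ge0$, until the coefficients $\frac{2+15t}{2}$, $\frac{36}{5}-2S_{\max}$ and $-\frac{126}{5}t$ match.
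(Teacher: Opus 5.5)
Your Steps 1 and 2 are sound. Weighting Lemma \ref{lemSCM}\eqref{lapB1} by $S_{\max}-S$ gives the exact identity
\[
\int_M(\tri S)^2=\int_M\Big[(17S-18-2S_{\max})|\nn S|^2+2S(S_{\max}-S)(3S-4)(3S-5)-4(S_{\max}-S)\C_1\Big].
\]
After dropping the last term, this is precisely the bound the paper uses. The paper reaches it via $4S\C_1\le 4S_{\max}\C_1$ and the second gap formula; your weight encodes the same fact, since $\int_M\tri\B_1=0$ is that formula.

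The gap is in how you split and in Step 3. You write $-\frac54(\tri S)^2=-\frac54 t(\tri S)^2-\frac54(1-t)(\tri S)^2$ and spend the Step 2 bound on the second piece. The leftover $-\frac54 t\int_M(\tri S)^2$ then has a \emph{negative} coefficient. For $\alpha>0$ one only has $-\alpha x^2+\beta x\le\beta^2/(4\alpha)$. The lower bound $-\alpha x^2+\beta x\ge-\beta^2/(4\alpha)$ that you invoke is false (let $x\to\infty$). So nothing controls $-\frac54 t\int_M(\tri S)^2+\int_M(w-S)L\,\tri S$ from below. Your split would also put the weight $\frac{5(1-t)}{2}$, not $\frac{5}{2(1-t)}$, on the $S_{\max}$ term. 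No later division by $1-t$ is available in that scheme to repair this.

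The missing idea is to overshoot rather than undershoot. Apply the Step 2 bound to $\frac{1}{1-t}>1$ copies of $\int_M(\tri S)^2$, i.e.
\[
-\tfrac54(\tri S)^2=-\tfrac{5}{4(1-t)}(\tri S)^2+\tfrac{5t}{4(1-t)}(\tri S)^2 ,
\]
so that the residual square has a positive coefficient. The gradient coefficient then becomes
\[
\tfrac34(25S-42)-\tfrac{5}{4(1-t)}(17S-18-2S_{\max})=-\tfrac{5}{4(1-t)}\big[(2+15t)S+\tfrac{36}{5}-2S_{\max}-\tfrac{126}{5}t\big].
\]
Step 1 turns this into $-\frac{5}{4(1-t)}(w-S)L\,\tri S$ with exactly the stated $L$, and the $S_{\max}$ term now carries $-\frac{5}{2(1-t)}$. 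The pointwise inequality
\[
\tfrac{5}{4(1-t)}\big(tx^2-(w-S)Lx\big)\ge-\tfrac{5(w-S)^2L^2}{16t(1-t)}
\]
then finishes the proof.

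This is the paper's argument in different bookkeeping. The paper applies the identity to all of $\int_M(\tri S)^2$ inside $\int_M[(\tri S)^2+\frac35(42-25S)|\nn S|^2]$. It bounds the cross term by $\int_M(w-S)L\,\tri S\le\frac1{4t}\int_M(w-S)^2L^2+t\int_M(\tri S)^2$, absorbs $t\int_M(\tri S)^2$ into the left-hand side, and divides by $1-t$.
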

\begin{proof}
Combining \eqref{Sid} and Lemma \ref{lemSCM} \eqref{shapenormal}, we obtain
\begin{equation*}
	\frac{1}{2}\tri S=\B_1-\frac{1}{2}S(3S-4).
\end{equation*}
Then we have
\begin{equation*}
	\begin{aligned}
		\int_M(\tri S)^2&=2\int_M\mathcal{B}_1\tri S-\int_M S(3S-4)\tri S\\
		&=2\int_M\mathcal{B}_1\tri S-3\int_M S^2\tri S+4\int_M S\tri S\\
		&=2\int_M\mathcal{B}_1\tri S+\int_M(6S-4)|\nabla S|^2.
	\end{aligned}
\end{equation*}
By the divergence theorem and Lemma \ref{lemSCM} \eqref{lapB1}, we obtain
\begin{equation*}
	\begin{aligned}
		\frac{1}{2}\int_M \mathcal{B}_1\tri S&=\frac{1}{2}\int_M S\tri\mathcal{B}_1\\
		&=\int_M\left(-\frac{7}{2}|\nn S|^2+\frac{11}{4}S|\nn S|^2-\frac{1}{4}S^2(3S-4)(9S-14)+S\mathcal{B}_2\right).
	\end{aligned}
\end{equation*}
Also, by Theorem \ref{SCMint} \eqref{2ndint} we have
\begin{equation}\label{int-C1}
	\begin{aligned}
		\int_M\C_1&=\int_M\left[\frac{1}{2}S(3S-4)(3S-5)-\frac{1}{2}|\nn S|^2\right]\\
		&=\int_M\left[\B_2-\frac{1}{4}S(3S-4)^2\right].
	\end{aligned}
\end{equation}
Hence we obtain
\begin{equation}\label{int-(triS)^2}
	\begin{aligned}
		\int_M(\tri S)^2&=\int_M\left[(6S-4)|\nabla S|^2+(11S-14)|\nabla S|^2-S^2(3S-4)(9S-14)+4S\B_2\right]\\
		&=\int_M\left[(17S-18)|\nabla S|^2-2S^2(3S-4)(3S-5)+4S\C_1\right].
	\end{aligned}
\end{equation}
By \eqref{int-C1} and \eqref{int-(triS)^2} we obtain that
	\begin{equation*}
		\begin{aligned}
			&\hspace{1.3em}\int_M\left[(\tri S)^2+\frac{3(1-t)}{5}(42-25S)|\nn S|^2\right]\\
			&=\int_M\left[\left(17S-18+\frac{3(1-t)}{5}(42-25S)\right)|\nn S|^2+4S\C_1-2S^2(3S-4)(3S-5)\right]\\
			&\le\int_M\left[\left(2S+\frac{36}{5}-\frac{126}{5}t+15tS\right)|\nn S|^2-2S_{\max}|\nn S|^2+2S(S_{\max}-S)(3S-4)(3S-5)\right]\\
			&=\int_M\left[\left(2S-2S_{\max}+\frac{36}{5}-\frac{126}{5}t+15tS\right)|\nn S|^2+2S(S_{\max}-S)(3S-4)(3S-5)\right]\\
			&=\int_M\left[(2+15t)S|\nn S|^2+\left(\frac{36}{5}-2S_{\max}-\frac{126}{5}t\right)|\nn S|^2+2S(S_{\max}-S)(3S-4)(3S-5)\right]\\
			&=\int_M\left[(w-S)\left(\frac{2+15t}{2}(w+S)+\frac{36}{5}-2S_{\max}-\frac{126}{5}t\right)\tri S+2S(S_{\max}-S)(3S-4)(3S-5)\right].
		\end{aligned}
	\end{equation*}
	Thus, by Cauchy-Schwarz's inequality and Young's inequality we have
    \begin{equation*}
		\begin{aligned}
			&\hspace{1.3em}\int_M\left[(\tri S)^2+\frac{3(1-t)}{5}(42-25S)|\nn S|^2\right]\\
			&\le\left[\int_M(w-S)^2\left(\frac{2+15t}{2}(w+S)+\frac{36}{5}-2S_{\max}-\frac{126}{5}t\right)^2\right]^{\frac{1}{2}}\left[\int_M(\tri S)^2\right]^{\frac{1}{2}}\\
			&\hspace{3em}+\int_M2S(S_{\max}-S)(3S-4)(3S-5) \\
			&\le\frac{1}{4t}\int_M(w-S)^2\left(\frac{2+15t}{2}(w+S)+\frac{36}{5}-2S_{\max}-\frac{126}{5}t\right)^2+t\int_M(\tri S)^2\\
			&\hspace{3em}+\int_M S(S_{\max}-S)(3S-4)(3S-5),
		\end{aligned}
	\end{equation*}
	which yields that, for $0<t<1$,
	\begin{equation*}
		\begin{aligned}
			\int_M\left[(\tri S)^2+\frac{3}{5}(42-25S)|\nn S|^2\right]&\le\frac{1}{4t(1-t)}\int_M(w-S)^2\left(\frac{2+15t}{2}(w+S)+\frac{36}{5}-2S_{\max}-\frac{126}{5}t\right)^2\\
			&\hspace{3em}+\int_M\frac{2S}{1-t}(S_{\max}-S)(3S-4)(3S-5).
		\end{aligned}
	\end{equation*}
	Combining Theorem \ref{newint}, we obtain that
	\begin{equation}\label{equation P parameters wt}
		\begin{aligned}
			&\hspace{1.3em}\int_M S(3S-4)(3S-5)(5S-9)\\
			&\ge\int_M\left[\frac{3}{4}(25S-42)|\nn S|^2-\frac{5}{4}(\tri S)^2\right]\\
			&\ge\int_M\left[-\frac{5(w-S)^2}{16t(1-t)}\left(\frac{2+15t}{2}(w+S)+\frac{36}{5}-2S_{\max}-\frac{126}{5}t\right)^2-\frac{5S(S_{\max}-S)(3S-4)(3S-5)}{2(1-t)}\right],
		\end{aligned}
	\end{equation}
	which proves the lemma.
\end{proof}
\subsection{The case $\frac{5}{3}\le S\le 1.7075$.}
\begin{thm}\label{left}
	Let $M$ be a closed minimal surface immersed in $\mathbb{S}^N$. If $\frac{5}{3}\le S\le 1.7075<\frac{9}{5}$, then $S\equiv\frac{5}{3}$ and the submanifold is Calabi's $2$-sphere with curvature $K\equiv\frac{1}{6}$.
\end{thm}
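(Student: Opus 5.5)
The plan is to apply Lemma \ref{leftlem1} with a well-chosen $w$ and $t$, and show that the resulting integrand $\mathsf{P}$ is pointwise strictly larger than the left-hand integrand $S(3S-4)(3S-5)(5S-9)$ unless $S\equiv\frac53$.

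First, the pinching $S\le 1.7075<2$ gives $K=1-\frac S2>0$, so Lemma \ref{lemSCM}, Theorem \ref{SCMint} and Lemma \ref{leftlem1} all apply. Set $s_0=S_{\max}\in[\frac53,1.7075]$. In the inequality of Lemma \ref{leftlem1}, move everything to one side. This gives
\begin{equation*}
\int_M\Big[S(3S-4)(3S-5)(5S-9)+\frac{5S(s_0-S)(3S-4)(3S-5)}{2(1-t)}+\frac{5(w-S)^2}{16t(1-t)}Q(S)^2\Big]\ge0,
\end{equation*}
where $Q(S)=\frac{2+15t}{2}(w+S)+\frac{36}{5}-2s_0-\frac{126}{5}t$. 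The first two terms share the factor $S(3S-4)(3S-5)\ge0$, so together they equal
\begin{equation*}
S(3S-4)(3S-5)\Big[(5S-9)+\frac{5(s_0-S)}{2(1-t)}\Big].
\end{equation*}
Since $5S-9\le 5(1.7075)-9<0$, the bracket is negative once $s_0-S$ is small relative to $1-t$.

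Next, choose $w=\frac53$. Then $(w-S)^2$ vanishes exactly at the lower endpoint. Near $S=\frac53$ this last term is only quadratic in $(S-\frac53)$, whereas the negative term is linear in $(3S-5)$. So the integrand is negative on a neighborhood of $\frac53$, provided $Q$ stays bounded. We then need to pick a constant $t\in(0,1)$, possibly depending on $s_0$, such that for all $S\in[\frac53,s_0]$ with $s_0\le1.7075$,
\begin{equation*}
S(3S-4)(3S-5)\Big[(5S-9)+\frac{5(s_0-S)}{2(1-t)}\Big]+\frac{5(S-\frac53)^2}{16t(1-t)}Q(S)^2\le0,
\end{equation*}
with equality only at $S=\frac53$. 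After dividing by $(3S-5)\ge0$, this becomes a one-variable polynomial inequality on a compact interval in the two parameters $S$ and $s_0$. We will verify it by bounding each factor monotonically. Using $S\le s_0\le1.7075$, $5S-9\le-\frac43$ and $s_0-S\le 1.7075-\frac53$, the negative term dominates $c\,(3S-5)$ for an explicit $c>0$, while the positive term is at most $C(3S-5)^2$ with $(3S-5)\le 0.1225$. The threshold $1.7075$ is presumably exactly where $C\cdot0.1225\le c$ fails for the best $t$. We will pick $t$ near the value minimizing the ratio, trying something like $t\approx\frac{1}{2}$ first and adjusting numerically.

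This is the main obstacle: finding $t$ and checking the polynomial sign condition uniformly in $s_0$ up to $1.7075$. It is delicate, with only a thin margin near the endpoint.

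Once the integrand is shown to be $\le0$ with equality only where $S=\frac53$, the integral inequality forces $S\equiv\frac53$ on $M$. Then $K\equiv\frac16$ is constant, and by Theorem \ref{SCMmain}\eqref{ob2} (equivalently Calabi's Theorem \ref{thm Calabi} together with Bryant's result for constant curvature), $M$ is Calabi's $2$-sphere with $K\equiv\frac16$ in $\mathbb{S}^6$.
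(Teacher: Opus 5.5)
Your reduction matches the paper's: Lemma \ref{leftlem1} with $w=\frac53$ and $t=\frac12$, followed by a pointwise sign argument. The paper writes this as $\int_M S(3S-4)(3S-5)\cdot(\text{a constant depending only on }s_0)\ge0$, which amounts to the same thing. The gap is the step you flag as the main obstacle. It carries all the quantitative content, you leave it open, and the method you sketch for it cannot reach $1.7075$.

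First, the bound $5S-9\le-\frac43$ is false. On $[\frac53,1.7075]$ one has $5S-9\in[-\frac23,-0.4625]$, and using it would make your test succeed spuriously.

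Second, bounding $5S-9$ and $s_0-S$ separately throws away their cancellation. For $t\le\frac12$ one has $(5S-9)+\frac{5(s_0-S)}{2(1-t)}\le 5s_0-9$, with equality at $t=\frac12$.

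Third, and decisively, your decoupled test fails even with this exact bracket. Take $t=\frac12$, $c=\frac53(9-5s_0)$ (from $S(3S-4)\ge\frac53$), and $C=\frac{5}{36}Q(s_0)^2$ with $Q(s_0)=\frac{11}{4}s_0+\frac{151}{60}$. At $s_0=1.7075$ the test $C\cdot0.1225\le c$ compares roughly $0.885$ with $0.771$, so it fails. It already fails near $s_0\approx1.704$. No choice of $t$ rescues it. Your $c$ can never exceed the value $s_0(3s_0-4)(9-5s_0)\approx0.8865$ attained at $S=s_0$, which forces $t(1-t)$ to be close to $\frac14$. For such $t$, the extra negativity gained at $S=\frac53$ is far too small. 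So $1.7075$ is not the threshold of your test.

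What is missing is the coupled monotonicity argument the paper uses. With $t=\frac12$, dividing the integrand by $S(3S-4)(3S-5)$ leaves $5s_0-9$ plus
\[
\frac{5(S-\frac53)^2Q(S)^2}{4S(3S-4)(3S-5)}=\frac{5}{36}\cdot\frac{3S-5}{3S-4}\cdot\frac{Q(S)^2}{S}.
\]
Both factors are increasing on $[\frac53,s_0]$. The first is trivially so. For the second, $Q>0$ and $Q/\sqrt S=\frac{19}{4}\sqrt S+\bigl(\frac{151}{60}-2s_0\bigr)/\sqrt S$ has a negative second coefficient. Hence the worst case is the single point $S=s_0$, not a combination of $S=\frac53$ for one factor and $S=s_0$ for another. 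The condition becomes exactly $\Theta_1(s_0)<0$. At $s_0=1.7075$ the two competing terms there are about $0.8865$ and $0.8850$, which shows how thin the margin is. With this in place, the rest of your argument goes through: an integrand $\le0$ vanishing only at $S=\frac53$ forces $S\equiv\frac53$, and then Theorem \ref{SCMmain}\eqref{ob2} applies.
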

\begin{proof}
	Without loss of generality, we assume that $\frac{5}{3}\le S\le 1.7075$ and $S\not\equiv\frac{5}{3}$. By Lemma \ref{leftlem1} we obtain that
	\begin{equation*}
		\begin{aligned}
			&\hspace{1.3em}\int_M S(3S-4)(3S-5)\left(5S-9+\frac{5(S_{\max}-S)}{2(1-t)}\right)\\
			&\ge\int_M-\frac{5(w-S)^2}{16t(1-t)}\left[\frac{2+15t}{2}(w+S)+\frac{36}{5}-2S_{\max}-\frac{126}{5}t\right]^2.
		\end{aligned}
	\end{equation*}
	If $0<t\le\frac{1}{2}$, then we have
	\begin{equation*}
		\begin{aligned}
			5S-9+\frac{5(S_{\max}-S)}{2(1-t)}&=\frac{5S_{\max}}{2(1-t)}+\frac{5S(1-2t)}{2(1-t)}-9\\
			&\le\frac{5S_{\max}}{2(1-t)}+\frac{5S_{\max}(1-2t)}{2(1-t)}-9\\
			&=5S_{\max}-9.
		\end{aligned}
	\end{equation*}
	Hence, by $\frac{5}{3}\le S\le\frac{9}{5}$, we have
	\begin{equation*}
		\int_M S(3S-4)(3S-5)\left[5S_{\max}-9+\frac{5(w-S)^2\left[\frac{2+15t}{2}(w+S)+\frac{36}{5}-2S_{\max}-\frac{126}{5}t\right]^2}{16t(1-t)S(3S-4)(3S-5)}\right]\ge 0.
	\end{equation*}
	For $\frac{5}{3}\le w\le S\le\frac{9}{5}$, due to $S_{\max}>\frac{5}{3}$ and
	\begin{equation*}
		\frac{(w-S)^2}{(3S-4)(3S-5)}=\frac{1}{9}\left(\frac{4-3w}{3S-4}+1\right)\left(\frac{5-3w}{3S-5}+1\right)\le\frac{(w-S_{\max})^2}{(3S_{\max}-4)(3S_{\max}-5)},
	\end{equation*}
	we derive that
	\begin{equation}\label{equation mil for t and w}
		\int_M S(3S-4)(3S-5)\left[5S_{\max}-9+\frac{5(w-S_{\max})^2\left[\frac{2+15t}{2}(w+S)+\frac{36}{5}-2S_{\max}-\frac{126}{5}t\right]^2}{16t(1-t)S(3S_{\max}-4)(3S_{\max}-5)}\right]\ge 0.
	\end{equation}
	Choosing $t=\frac{1}{2}$, we have
	\begin{equation*}
		\int_M S(3S-4)(3S-5)\left[5S_{\max}-9+\frac{5(w-S_{\max})^2\left[\frac{19}{4}(w+S)-2S_{\max}-\frac{27}{5}\right]^2}{4S(3S_{\max}-4)(3S_{\max}-5)}\right]\ge 0.
	\end{equation*}
	Let
	\begin{equation*}
		\begin{aligned}
			f=\frac{19}{4}(w+S)-2S_{\max}-\frac{27}{5}.
		\end{aligned}
	\end{equation*}
	For $\frac{5}{3}\le w\le\frac{9}{5}$, a direct calculation gives that $f\ge0$ and $\frac{19}{4}w-2S_{\max}-\frac{27}{5}<0$. Then we have
	\begin{equation*}
		\frac{1}{S}f^2=\left[\frac{19}{4}\sqrt{S}+\frac{1}{\sqrt{S}}\left(\frac{19}{4}w-2S_{\max}-\frac{27}{5}\right)\right]^2\le\frac{\left(\frac{11}{4}S_{\max}+\frac{19}{4}w-\frac{27}{5}\right)^2}{S_{\max}}.
	\end{equation*}
	Hence
	\begin{equation*}
		\int_M S(3S-4)(3S-5)\left[5S_{\max}-9+\frac{5(w-S_{\max})^2\left(\frac{11}{4}S_{\max}+\frac{19}{4}w-\frac{27}{5}\right)^2}{4S_{\max}(3S_{\max}-4)(3S_{\max}-5)}\right]\ge 0.
	\end{equation*}
Since $\frac{5}{3}\le S\le\frac{9}{5}$ and $S\not \equiv\frac{5}{3}$, we have
	\begin{equation*}
		5S_{\max}-9+\frac{5(w-S_{\max})^2\left(\frac{11}{4}S_{\max}+\frac{19}{4}w-\frac{27}{5}\right)^2}{4S_{\max}(3S_{\max}-4)(3S_{\max}-5)}\ge0,
	\end{equation*}
	which yields that
	\begin{equation}\label{middleref}
		S_{\max}(3S_{\max}-4)(3S_{\max}-5)(5S_{\max}-9)+\frac{5}{4}(w-S_{\max})^2\left(\frac{11}{4}S_{\max}+\frac{19}{4}w-\frac{27}{5}\right)^2\ge0.
	\end{equation}
	If $w=\frac{5}{3}$, then
	\begin{equation*}
		S_{\max}(3S_{\max}-4)(3S_{\max}-5)(5S_{\max}-9)+\frac{5}{4}\left(\frac{5}{3}-S_{\max}\right)^2\left(\frac{11}{4}S_{\max}+\frac{19}{4}\cdot\frac{5}{3}-\frac{27}{5}\right)^2\ge0,
	\end{equation*}
	which yields that
	\begin{equation*}
		\Theta_1(S_{\max})\coloneqq	S_{\max}(3S_{\max}-4)(5S_{\max}-9)+\frac{5}{36}(3S_{\max}-5)\left(\frac{11}{4}S_{\max}+\frac{151}{60}\right)^2\ge0.
	\end{equation*}
	By a numerical calculation, the root  of $\Theta_1$ lies in the interval $(1.7075,1.7076)$ since $$\Theta_1(1.7075)<0\quad\text{and}\quad\Theta_1(1.7076)>0.$$ Moreover, the root is unique in the interval $\left[\frac{5}{3},\frac{9}{5}\right]$. This implies that $S_{\max}>1.7075$, which contradicts the assumption. Hence, $S\equiv\frac{5}{3}$ and the submanifold is Calabi's $2$-sphere with curvature $K\equiv\frac{1}{6}$ by Theorem \ref{thm Calabi}.
\end{proof}

\subsection{The case $\frac{5}{3}\le S\le \frac{9}{5}$ and  $S\not\equiv\frac{5}{3}$.}
\begin{thm}\label{middle}
	Let $M$ be a closed minimal surface immersed in $\mathbb{S}^N$. If $\frac{5}{3}\le S\leq \frac{9}{5}$ and $S\not \equiv\frac{5}{3}$, then
	\begin{equation*}
		S_{\max}-S_{\min}\ge\frac{12S_{\min}\left( 9-5S_{\min}\right) (3S_{\min}-4)}{60S_{\min}(3S_{\min}-4)+5\left(\frac{19}{4}S_{\min}-\frac{9}{20}\right)^2}.
	\end{equation*}
\end{thm}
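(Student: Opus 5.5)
We reuse the chain of inequalities from the proof of Theorem \ref{left}, but stop before specializing to $w=\frac53$. The general inequality \eqref{middleref}, valid for every constant $w$ with $\frac53\le w\le S$, is the starting point. Since $S\le\frac95<2$, the Gaussian curvature is positive, so Lemma \ref{leftlem1} and everything built on it applies.

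The one place where that derivation used $S\not\equiv\frac53$ is in dropping the positive factor $S(3S-4)(3S-5)$. It only needs this factor to be positive on a set of positive measure, which holds under our hypothesis. I will take $w=S_{\min}$, which is admissible since $\frac53\le S_{\min}\le S$. Write $m=S_{\min}$, $M=S_{\max}$, $x=M-m\ge0$, and
\[F\coloneqq\frac{11}{4}M+\frac{19}{4}m-\frac{27}{5}.\]
Then \eqref{middleref} reads
\[M(3M-4)(3M-5)(9-5M)\le\frac54x^2F^2 .\]

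First I dispose of the degenerate case $x=0$. Then $S$ is a constant in $(\frac53,\frac95]$, so $K$ is a positive constant. By Bryant/Calabi (Theorem \ref{thm Calabi}) the surface is a Calabi sphere, which forces $S=\frac95$. The claimed right-hand side then vanishes and the inequality is trivial. So assume $x>0$.

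Next I bound each factor of the left side from below:
\begin{itemize}
\item $M(3M-4)\ge m(3m-4)>0$, since $t\mapsto t(3t-4)$ is increasing for $t\ge\frac53$;
\item $3M-5=(3m-5)+3x\ge 3x$, since $3m-5\ge0$;
\item $9-5M\ge0$, since $M\le\frac95$.
\end{itemize}
On the right side, $M\le\frac95$ gives $\frac{11}{4}M-\frac{27}{5}\le\frac{99}{20}-\frac{108}{20}=-\frac{9}{20}$. Hence
\[0\le F\le \frac{19}{4}m-\frac{9}{20},\]
where $F\ge0$ is the "$f\ge0$"-type estimate already checked in the proof of Theorem \ref{left}.

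Substituting these bounds and dividing by $x>0$ gives
\[12\,m(3m-4)(9-5M)\le 5x\Bigl(\frac{19}{4}m-\frac{9}{20}\Bigr)^2 .\]
Now write $9-5M=(9-5m)-5x$ and move the $x$-term to the right:
\[12\,m(3m-4)(9-5m)\le x\Bigl[60\,m(3m-4)+5\Bigl(\frac{19}{4}m-\frac{9}{20}\Bigr)^2\Bigr].\]
The bracket is positive, so dividing by it yields exactly the asserted lower bound for $S_{\max}-S_{\min}$.

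The main obstacle is the sign bookkeeping. The lower bound $3M-5\ge3x$ must be used in place of the naive bound that causes degeneration at the endpoint $\frac53$. The bound on $F$ requires $F\ge0$, so that squaring preserves the inequality. If $F\ge0$ turns out to be delicate for $w=S_{\min}$ near $\frac53$, I would fall back on the pointwise form $\frac1S f^2\le\cdots$ from the proof of Theorem \ref{left} and re-derive the sign directly.
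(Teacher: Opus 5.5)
Your proposal is correct and is essentially the paper's argument: with $w=S_{\min}$ in \eqref{middleref}, your bounds $S_{\max}(3S_{\max}-4)\ge S_{\min}(3S_{\min}-4)$, $3S_{\max}-5\ge 3(S_{\max}-S_{\min})$ (which is the paper's $1+\frac{5-3w}{3S_{\max}-5}\le 1$) and $0\le \frac{11}{4}S_{\max}+\frac{19}{4}S_{\min}-\frac{27}{5}\le \frac{19}{4}S_{\min}-\frac{9}{20}$ are exactly the ones used there. The only difference is that you divide by $S_{\max}-S_{\min}$ rather than by $3S_{\max}-5$, which forces your separate treatment of constant $S$; that case is handled correctly, but it does not need Calabi, since \eqref{middleref} with $S_{\max}=S_{\min}>\frac53$ already forces $S_{\max}=\frac95$.
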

\begin{proof}
	For $w\ge\frac{5}{3}$, by \eqref{middleref} we obtain that
	\begin{equation*}\label{euqation S-1.8}
		S_{\max}(3S_{\max}-4)(3S_{\max}-5)(5S_{\max}-9)+\frac{5}{4}(S_{\max}-w)^2\left(\frac{11}{4}S_{\max}+\frac{19}{4}w-\frac{27}{5}\right)^2\ge0.
	\end{equation*}
	Since $S\not\equiv\frac{5}{3}$, we obtain that $\frac{5}{3}<S_{\max}\le\frac{9}{5}$ and
	\begin{equation*}
		\begin{aligned}
			0&\le w(3w-4)(5S_{\max}-9)+\frac{5(S_{\max}-w)^2}{4(3S_{\max}-5)}\left(\frac{11}{4}S_{\max}+\frac{19}{4}w-\frac{27}{5}\right)^2\\
			&=w(3w-4)(5S_{\max}-9)+\frac{5}{12}(S_{\max}-w)\left(1+\frac{5-3w}{3S_{\max}-5}\right)\left(\frac{11}{4}S_{\max}+\frac{19}{4}w-\frac{27}{5}\right)^2\\
			&\le w(3w-4)(5S_{\max}-9)+\frac{5}{12}(S_{\max}-w)\left(\frac{11}{4}\cdot\frac{9}{5}+\frac{19}{4}w-\frac{27}{5}\right)^2\\
			&=w(3w-4)(5S_{\max}-9)+\frac{5}{12}(S_{\max}-w)\left(\frac{19}{4}w-\frac{9}{20}\right)^2.
		\end{aligned}
	\end{equation*}
	Thus
	\begin{equation*}
		5S_{\max}\left(12w(3w-4)+\left(\frac{19}{4}w-\frac{9}{20}\right)^2\right)\ge108w(3w-4)+5w\left(\frac{19}{4}w-\frac{9}{20}\right)^2.
	\end{equation*}
	Therefore we obtain that
	\begin{equation*}
		S_{\max}\ge\frac{108w(3w-4)+5w\left(\frac{19}{4}w-\frac{9}{20}\right)^2}{60w(3w-4)+5\left(\frac{19}{4}w-\frac{9}{20}\right)^2}>w.
	\end{equation*}
	Then the result follows by choosing $w=S_{\min}$.
\end{proof}
\begin{rem}
	If we perform numerical calculations on formula \eqref{equation mil for t and w}  to find a better parameter $t$, we will obtain a better result than  that in  Theorem \ref{middle}.
\end{rem}

\subsection{The case $1.7853\le S\le\frac{9}{5}$.}
\begin{thm}\label{right}
	Let $M$ be a closed minimal surface immersed in $\mathbb{S}^N$. If $\frac{5}{3}<1.7853\le S\le\frac{9}{5}$, then $S\equiv\frac{9}{5}$  and the submanifold is Calabi's $2$-sphere with curvature $K\equiv\frac{1}{10}$.
\end{thm}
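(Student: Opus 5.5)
We argue by contradiction. Suppose $1.7853\le S\le\frac{9}{5}$ and $S\not\equiv\frac{9}{5}$, so that $S_{\min}<\frac{9}{5}$ and $9-5S>0$ on a nonempty open set. Since $S\ge 1.7853$, the Gaussian curvature is positive, so Lemma \ref{leftlem1} applies.

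\textbf{Step 1: move everything to one side.} In Lemma \ref{leftlem1} the factor $5S-9$ is now nonpositive, so this endpoint is the mirror image of Theorem \ref{left}. We rewrite the inequality as
\begin{equation*}
\int_M S(3S-4)(3S-5)\left[(9-5S)-\frac{5(S_{\max}-S)}{2(1-t)}\right]\le\int_M\frac{5(w-S)^2}{16t(1-t)}\,f_t^2,
\end{equation*}
where $f_t=\frac{2+15t}{2}(w+S)+\frac{36}{5}-2S_{\max}-\frac{126}{5}t$.

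\textbf{Step 2: choose $w$ and factor out $9-5S$.} We take $w=\frac{9}{5}$, which is the natural centre at this endpoint. Since $S_{\max}\le\frac{9}{5}$, we have $S_{\max}-S\le\frac{1}{5}(9-5S)$. For $0<t<\frac12$ this gives the lower bound
\begin{equation*}
(9-5S)-\frac{5(S_{\max}-S)}{2(1-t)}\ge(9-5S)\,\frac{1-2t}{2(1-t)}\ge0 .
\end{equation*}
On the right-hand side we use $(w-S)^2=\frac{1}{5}(9-5S)\left(\frac{9}{5}-S\right)$. Multiplying through by $1-t$, we obtain
\begin{equation*}
\int_M(9-5S)\left[\frac{1-2t}{2}\,S(3S-4)(3S-5)-\frac{1}{16t}\left(\frac95-S\right)f_t^2\right]\le0 .
\end{equation*}

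\textbf{Step 3: show the bracket is strictly positive.} This would give the contradiction, since $9-5S\ge0$ and $9-5S>0$ somewhere. On $[1.7853,\frac95]$, the function $S(3S-4)(3S-5)$ is increasing, so it is bounded below by its value at $1.7853$. The factor $\frac95-S$ is at most $\frac95-1.7853=0.0147$. The function $f_t$ is linear and increasing in $S$ (with $w=\frac{9}{5}$ and $S_{\max}\in[1.7853,\frac95]$), and we bound $f_t^2$ by its maximum over this range, after checking the sign of $f_t$ there.

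It then suffices to find one $t\in(0,\frac12)$ for which
\begin{equation*}
8t(1-2t)\,S(3S-4)(3S-5)>\left(\frac95-S\right)\max f_t^2
\end{equation*}
holds for every $S\in[1.7853,\frac95)$. Equivalently, we need a one-variable polynomial $\Theta_2$ in $S$, built from these worst-case bounds, to be positive there. We expect $1.7853$ to be, up to rounding, the unique root of $\Theta_2$ in $[\frac53,\frac95]$, certified by evaluating $\Theta_2$ at $1.7852$ and $1.7853$ as done for $\Theta_1$ in Theorem \ref{left}. Once the contradiction is reached, $S\equiv\frac95$, and Theorem \ref{thm Calabi} identifies $M$ as Calabi's $2$-sphere with $K\equiv\frac1{10}$.

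\textbf{Main obstacle.} The delicate point is Step 3, namely choosing $t$ and controlling the competing factors. As $t\to0$ the error term blows up like $\frac{1}{t}$, while as $t\to\frac12$ the good term degenerates. The maximum of $f_t^2$ also depends on both $S$ and $S_{\max}$, and we must bound these jointly without losing too much. Our first attempt is to fix a moderate value such as $t=\frac14$, or the value of $t$ maximizing $t(1-2t)$. If the crude bounds then fall short of $1.7853$, we will keep $S$ and $S_{\max}$ separate, using $\frac95-S\le\frac95-S_{\min}$ and monotonicity in $S_{\max}$, and optimize $t$ numerically.
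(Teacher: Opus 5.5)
Your proposal is correct and is essentially the paper's argument. The only difference in route is that the paper replaces the gradient coefficient by the constant $15tS_{\max}+\frac{36}{5}-\frac{126}{5}t$ before applying Lemma~\ref{cauchy}, whereas you invoke Lemma~\ref{leftlem1}. With $w=\frac95$, $t=\frac14$ and $S_{\max}-S\le\frac15(9-5S)$, your Step~3 inequality becomes exactly $\Theta_2<0$ on $[1.7853,\frac95)$, once you use $f_t\le\frac95t+\frac{36}{5}=7.65$.

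One point needs care. That bound must be obtained using $S\le S_{\max}\le\frac95$ jointly: for $t>\frac{2}{15}$ one has $f_t\le\frac{2+15t}{2}(\frac95+S)+\frac{36}{5}-2S-\frac{126}{5}t$, which is increasing in $S$. If you instead maximize over $S$ and $S_{\max}$ independently in $[1.7853,\frac95]$, you only get $f_t\le 7.2294+1.8t$, and that bound is too weak to reach $1.7853$ for any $t$.
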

\begin{proof}
	Without loss of generality, we assume that $\frac{5}{3}<1.7853\le S\le\frac{9}{5}$ and   $S\not\equiv\frac{9}{5}$.
	By \eqref{int-C1} and \eqref{int-(triS)^2} we obtain that
	\begin{equation*}
		\begin{aligned}
			&\hspace{1.3em}\int_M\left[(\tri S)^2+\frac{3(1-t)}{5}(42-25S)|\nn S|^2\right]\\
			&=\int_M\left[(17S-18)|\nn S|^2-2S^2(3S-4)(3S-5)+4S\C_1+\frac{3(1-t)}{5}(42-25S)|\nn S|^2\right]\\
			&\le\int_M\left[\left(17S-18+\frac{3(1-t)}{5}(42-25S)-2S_{\max}\right)|\nn S|^2+2S(S_{\max}-S)(3S-4)(3S-5)\right]\\
			&=\int_M\left[\left(2S+\frac{36}{5}-\frac{126}{5}t+15tS-2S_{\max}\right)|\nn S|^2+2S(S_{\max}-S)(3S-4)(3S-5)\right]\\
			&\le\int_M\left[\left(15tS_{\max}+\frac{36}{5}-\frac{126}{5}t\right)|\nn S|^2+2S(S_{\max}-S)(3S-4)(3S-5)\right].
		\end{aligned}
	\end{equation*}
	Using Lemma \ref{cauchy}, we obtain
	\begin{equation*}
		\begin{aligned}
			&\hspace{1.3em}\int_M\left[(\tri S)^2+\frac{3}{5}(42-25S)(1-t)|\nn S|^2\right]\\
			&\le\left(15tS_{\max}+\frac{36}{5}-\frac{126}{5}t\right)\sqrt{\int_M(S-w)^2}\sqrt{\int_M(\tri S)^2}+2S(S_{\max}-S)(3S-4)(3S-5)\\
			&\le\frac{\left(15tS_{\max}+\frac{36}{5}-\frac{126}{5}t\right)^2}{4t}\int_M(S-w)^2+t\int_M(\tri S)^2+\int_M2S(S_{\max}-S)(3S-4)(3S-5).
		\end{aligned}
	\end{equation*}
	Hence we obtain that
	\begin{equation*}
		\begin{aligned}
			&\hspace{1.3em}\int_M\left[(\tri S)^2+\frac{3}{5}(42-25S)|\nn S|^2\right]\\
			&\le\frac{\left(15tS_{\max}+\frac{36}{5}-\frac{126}{5}t\right)^2}{4t(1-t)}\int_M(S-w)^2+\frac{2}{1-t}\int_M S(S_{\max}-S)(3S-4)(3S-5),
		\end{aligned}
	\end{equation*}
	which yields that
	\begin{equation}\label{equation 1.78 S 1.8 S}
		\begin{aligned}
			&\hspace{1.3em}\int_M\left[\frac{3}{4}(25S-42)|\nn S|^2-\frac{5}{4}(\tri S)^2\right]\\
			&\ge-\frac{5\left(15tS_{\max}+\frac{36}{5}-\frac{126}{5}t\right)^2}{16t(1-t)}\int_M(S-w)^2-\frac{5}{2(1-t)}\int_M S(S_{\max}-S)(3S-4)(3S-5).
		\end{aligned}
	\end{equation}
	Combining Theorem \ref{newint}, we obtain that
	\begin{equation*}
		\begin{aligned}
			\int_MS(3S-4)(3S-5)\left(5S-9+\frac{5(S_{\max}-S)}{2(1-t)}\right)\ge-\frac{5\left(15tS_{\max}+\frac{36}{5}-\frac{126}{5}t\right)^2}{16t(1-t)}\int_M(S-w)^2.
		\end{aligned}
	\end{equation*}
	Choosing $w=\frac{9}{5}$, we derive that
	\begin{equation*}
		\begin{aligned}
			\int_MS(3S-4)(3S-5)\left(5S-9+\frac{5(S_{\max}-S)}{2(1-t)}\right)\ge-\frac{\left(15tS_{\max}+\frac{36}{5}-\frac{126}{5}t\right)^2}{80t(1-t)}\int_M(5S-9)^2.
		\end{aligned}
	\end{equation*}
	Since $S_{\max}\le\frac{9}{5}$, for $0<t\le\frac{1}{2}$ we obtain that
	\begin{equation*}
		\begin{aligned}
			\int_MS(3S-4)(3S-5)(9-5S)\left(\frac{2t-1}{2-2t}+\frac{\left(15tS_{\max}+\frac{36}{5}-\frac{126}{5}t\right)^2(9-5S)}{80t(1-t)S(3S-4)(3S-5)}\right)\ge0.
		\end{aligned}
	\end{equation*}
By $\frac{5}{3}<S\le\frac{9}{5}$ and   $S\not\equiv\frac{9}{5}$, we have $S_{\min}<\frac{9}{5}$ and
	\begin{equation*}
		\frac{2t-1}{2-2t}S_{\min}(3S_{\min}-4)(3S_{\min}-5)+\frac{\left(27t-\frac{126}{5}t+\frac{36}{5}\right)^2(9-5S_{\min})}{80t(1-t)}\ge0,
	\end{equation*}
	which yields that
	\begin{equation*}
		\Theta_2(S_{\min})\coloneqq40t(2t-1)S_{\min}(3S_{\min}-4)(3S_{\min}-5)+\left(\frac{9}{5}t+\frac{36}{5}\right)^2(9-5S_{\min})\ge0.
	\end{equation*}
	Choosing $t=\frac{1}{4}$, by a numerical calculation we obtain that the root of $\Theta_2$ lies in the interval $(1.7852,1.7853)$ since $$\Theta_2(1.7852)>0\quad\text{and}\quad\Theta_2(1.7853)<0.$$ Moreover, the root is unique in the interval $\left[\frac{5}{3},\frac{9}{5}\right]$. This implies that $S_{\min} < 1.7853$, which contradicts the assumption. Hence, $S\equiv\frac{9}{5}$ and the submanifold is Calabi's $2$-sphere with curvature $K\equiv\frac{1}{10}$ by Theorem \ref{thm Calabi}.
\end{proof}

Now we can give the proofs of Theorem \ref{corollary oscillation} and  Corollary \ref{corollary no minimal immersion}.
\begin{proof}[\textbf{Proof of Theorem \ref{corollary oscillation}}]
	Suppose that $$y(x)=\frac{12x(9-5x)(3x-4)}{60x(3x-4)+5\left(\frac{19}{4}x-\frac{9}{20}\right)^2}.$$ Then we have $$y'(x)=\frac{N'(x)D(x)-N(x)D'(x)}{[D(x)]^2},$$ where $N(x)=-180x^3+564x^2-432x$, $N'(x)=-540x^2+1128x-432$, $D(x)=292.8125x^2-261.375x+1.0125$ and $D'(x)=585.625x-261.375$. 
	%If $\frac{5}{3}\le x\le\frac{9}{5}$, we have $y'(x)<0$ and $y(x)$ is monotonically decreasing. 
	For $x \in [\frac{5}{3}, \frac{9}{5}]$, it follows that $y'(x) < 0$, and thus $y(x)$ is monotonically decreasing.
	This completes the proof by $$y(1.7853)> 0.004565>\frac{1}{220}$$ and Theorem \ref{mainthm}.
\end{proof}

\begin{proof}[\textbf{Proof of Corollary \ref{corollary no minimal immersion}}]
Due to $\frac{5}{3}\le S\le\frac{9}{5}$, $S\not\equiv\frac{5}{3}$ and $$S_{\max}<\frac{108S_{\min}(3S_{\min}-4)+5S_{\min}\left(\frac{19}{4}S_{\min}-\frac{9}{20}\right)^2}{60S_{\min}(3S_{\min}-4)+5\left(\frac{19}{4}S_{\min}-\frac{9}{20}\right)^2},$$
%	The proof is completed by Theorem \ref{mainthm} \eqref{zhong}.
this leads to a contradiction with Theorem \ref{mainthm} \eqref{zhong}. Hence, the existence of such a surface is ruled out.
\end{proof}

\section{Proof of Theorem \ref{thm:ss-three-gaps}}
%Let $\tri$, $\operatorname{div}$  and $d\mu$ be Laplacian, divergence and volume element on $M$, respectively. Colding and Minicozzi \cite{Cold12} introduced a linear operator
%\begin{equation*}\label{eq:ss-drift}
%	\mathcal L=\tri-\frac{1}{2}\langle F,\nabla(\cdot)\rangle=e^{\frac{|F|^2}{4}}\operatorname{div}\left(e^{-\frac{|F|^2}{4}}\nabla(\cdot)\right)
%\end{equation*}
%on self-shrinkers.
%In this  section, we provide the proof  of  Theorem \ref{thm:ss-three-gaps}.
Let  $F: M^n\to\mathbb{R}^{n+p}$ be a self-shrinker of the mean curvature flow.
 By definition, we have
\begin{equation}\label{eq:ss-Acirc}
	|\A|^2=|\AA|^2+\frac{1}{n}|\H|^2.%\text{ %and }|\nabla\A|^2=|\nabla\AA|^2+\frac{1}{2}|\nabla\H|^2.
\end{equation}
The key bridge from self-shrinkers to minimal surfaces in spheres is the following theorem of Smoczyk \cite{Smoc05}.
\begin{thm}[Smoczyk \cite{Smoc05}]\label{thm:smoczyk}
	Let $F:M^n\to\mathbb{R}^{n+p}$ be a closed self-shrinker of the mean curvature flow. Then $M$ is a minimal submanifold of the sphere $\mathbb{S}^{n+p-1}(\sqrt{2n})$ if and only if $\H\neq 0$ and ${\H}/{|\H|}$ is parallel in the normal bundle.
\end{thm}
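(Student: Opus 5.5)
The statement has two directions. The forward direction is a direct computation. For the converse, the plan is to reduce to showing that the tangential part $F^{T}$ of the position vector vanishes; I have not yet found the argument that forces this, and I flag that step below.

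\emph{Forward direction.} Suppose $M$ is minimal in $\mathbb{S}^{n+p-1}(\sqrt{2n})$. The mean curvature vector in $\mathbb{R}^{n+p}$ splits into the mean curvature inside the sphere, which is $0$, plus the contribution of the sphere itself. That contribution is $-\frac{n}{r^2}F=-\frac12F$, using $r^2=2n$. Since $F$ is normal to the sphere, $F^{\perp}=F$, so \eqref{eq:ss-def} holds. Moreover $\H=-\frac12F$ never vanishes, since $|F|=\sqrt{2n}$. Finally $\H/|\H|=-F/\sqrt{2n}$ and $D_XF=X$ is tangent, so $\nabla^{\perp}(F/|F|)=0$. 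Hence the normalized mean curvature vector is parallel.

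\emph{Converse: first-order identities.} Assume $M$ is closed, $\H\neq0$, and $\nu\coloneqq\H/|\H|$ satisfies $\nabla^{\perp}\nu=0$. The goal is $F^{T}\equiv0$. Granting this, $\nabla|F|^2=2F^{T}=0$, so $|F|$ is constant and $F=F^{\perp}=-2\H$. The identity $\Delta|F|^2=2n+2\l F,\H\r=2n-|F|^2$ then gives $|F|^2=2n$. Minimality in the sphere follows from the computation in the forward direction, read backwards.

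To get at $F^T$, I will differentiate \eqref{eq:ss-def}. From $D_{e_i}F=e_i$ we get $\nabla^{\perp}_{e_i}F^{\perp}=-h(e_i,F^{T})$, hence
\[
\nabla^{\perp}_{e_i}\H=\tfrac12\,h(e_i,F^{T}).
\]
On the other hand, $\H=|\H|\nu$ with $\nu$ parallel gives $\nabla^{\perp}_{e_i}\H=(\nabla_{e_i}|\H|)\,\nu$. Comparing the two expressions yields
\[
A^{\alpha}(F^{T})=0 \ \text{for every normal } e_\alpha\perp\nu,\qquad \nabla|\H|=\tfrac12A^{\nu}(F^{T}).
\]
I will also use two further consequences of the hypotheses. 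The Ricci equation applied to the parallel section $\nu$ gives $[A^{\nu},A^{\alpha}]=0$ for all $\alpha$. And $F^{\perp}=-2|\H|\nu$, so $\l F,\nu\r=-2|\H|$.

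\emph{Main obstacle: forcing $F^{T}=0$.} For this step I would work with the drift Laplacian $\mathcal L=\Delta-\frac12\l F,\nabla\cdot\r$, which is self-adjoint with respect to $e^{-|F|^2/4}d\mu$ on the closed manifold $M$. Two formulas are available:
\[
\mathcal L|F|^2=2n-|F|^2,\qquad \mathcal L|\H|=|\H|\Bigl(\tfrac12-|A^{\nu}|^2\Bigr).
\]
The second should follow from the previous step together with $\nu$ being parallel, by the standard Simons-type computation for self-shrinkers. I would then combine these weighted identities, using $\nabla|\H|=\frac12A^{\nu}(F^T)$ and $A^{\alpha}(F^T)=0$, aiming to integrate by parts to an expression of the form $\int_M|F^{T}|^2(\cdots)\,e^{-|F|^2/4}=0$ with a positive integrand. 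One candidate is to test the second formula against $|\H|^{-1}$ weighted quantities involving $|F|^2$. Identifying the right combination is the step I expect to be hardest. If that route fails, the fallback is Smoczyk's own argument from \cite{Smoc05}.
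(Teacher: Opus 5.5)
Your forward direction, the reduction from $F^{T}\equiv0$ to minimality in $\mathbb{S}^{n+p-1}(\sqrt{2n})$, and your first-order identities are all correct. The converse, however, is not proved. Forcing $F^{T}\equiv0$ is the whole content of Smoczyk's theorem, and you leave it open.

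Moreover, the route you outline cannot close it. Every identity you plan to combine holds verbatim when $n=p=1$ for the Abresch--Langer curves: $\mathcal L|F|^2=2n-|F|^2$, $\mathcal L|\H|=|\H|(\frac12-|A^{\nu}|^2)$, $\nabla|\H|=\frac12A^{\nu}(F^{T})$, $A^{\alpha}(F^{T})=0$ and $[A^{\nu},A^{\alpha}]=0$. These curves are closed, immersed, locally convex planar self-shrinkers other than the circle. So $\H\neq0$, and $\nu$ is automatically parallel because the normal bundle is a line bundle. Yet $F^{T}\not\equiv0$, since $|F|$ is not constant. Hence no weighted integration by parts using only these identities can produce $\int_M|F^{T}|^2(\cdots)e^{-|F|^2/4}=0$ with a positive integrand. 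A correct argument must use $n\ge2$ somewhere. The statement itself needs this hypothesis (the paper only applies it with $n=2$), and your plan never uses it.

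The missing idea is to work with the tensor $A^{\nu}$ rather than the scalar $|\H|$.
\begin{itemize}
\item Differentiate $h(\cdot,F^{T})=2\,d|\H|\otimes\nu$ covariantly and trace, using Codazzi, $\nabla^{\perp}\H=d|\H|\otimes\nu$ and $\nabla F^{T}=\mathrm{Id}-2|\H|A^{\nu}$. The $\nu$-component is your formula for $\mathcal L|\H|$. The $\nu^{\perp}$-component gives $\operatorname{tr}(A^{\nu}A^{\alpha})=0$ for $\alpha\perp\nu$.
\item Together with $[A^{\nu},A^{\alpha}]=0$, this collapses Simons' identity to $\mathcal LA^{\nu}=(\frac12-|A^{\nu}|^2)A^{\nu}$, exactly as for hypersurfaces.
\item Then $T\coloneqq A^{\nu}/|\H|$ satisfies $\operatorname{div}(|\H|^2e^{-|F|^2/4}\nabla T)=0$. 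Pairing with $T$ gives $\int_M|\H|^2|\nabla T|^2e^{-|F|^2/4}=0$, so $T$ is parallel (Huisken's argument for $H>0$).
\item Next, $\ker T$ is a parallel distribution on which $\nabla F^{T}$ is the identity. Along a geodesic $\gamma$ tangent to $\ker T$ this gives $\frac{d}{ds}\l F^{T},\gamma'\r=|\gamma'|^2$, which is impossible on a closed $M$ where $F^{T}$ is bounded. So $T$ is invertible with constant eigenvalues $\lambda_i$.
\item Finally, the Codazzi symmetry of $\nabla A^{\nu}=d|\H|\otimes T$ gives $(\nabla_{k}|\H|)\lambda_i=0$ for $i\neq k$ in an eigenframe. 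For $n\ge2$ this forces $\nabla|\H|=0$, hence $A^{\nu}(F^{T})=0$ and so $F^{T}=0$.
\end{itemize}
This last step is exactly where $n=1$ fails.
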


For 2-dimensional self-shrinkers, we have the following proposition by Theorem \ref{thm:smoczyk}.
%For  surfaces, the relation between the self-shrinker quantity $|\AA|^2$ and the spherical quantity $S$ is especially simple.

\begin{prop}\label{prop:ss-conversion}
	Let $F:M^2\to\mathbb{R}^{2+p}\;(p\ge2)$ be a closed self-shrinker of the mean curvature flow. Suppose the mean curvature is nowhere vanishing and the normalized mean curvature vector is parallel in the normal bundle. Let $A$ and $H$ denote the second fundamental form and mean curvature vector of $M$ when it is viewed as a submanifold of $\mathbb{S}^{1+p}(2)$. Then we have
	\begin{equation*}\label{eq:ss-Atilde-Acirc}
		H\equiv 0\text{ and }|A|^2=|\AA|^2.
	\end{equation*}
\end{prop}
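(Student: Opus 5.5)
By Theorem \ref{thm:smoczyk}, since $\H$ never vanishes and $\H/|\H|$ is parallel in the normal bundle, $M$ lies in the sphere $\mathbb{S}^{1+p}(2)$ of radius $\sqrt{2n}=2$ and is minimal there. This gives $H\equiv0$ at once. It remains to compare the second fundamental form $A$ of $M\subset\mathbb{S}^{1+p}(2)$ with the traceless part $\AA$ of $\A$, the second fundamental form of $M\subset\mathbb{R}^{2+p}$.

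Along $M$, the normal bundle of $M$ in $\mathbb{R}^{2+p}$ splits orthogonally as the normal bundle of $M$ in the sphere plus the line spanned by the position vector $F$ (the unit normal of the sphere is $F/2$). For tangent fields $X,Y$ we decompose
\begin{equation*}
	\A(X,Y)=A(X,Y)+\l \A(X,Y),\tfrac{F}{2}\r\tfrac{F}{2}.
\end{equation*}
Since $\l Y,F\r=0$ and $D_XF=X$, differentiating gives $\l D_XY,F\r=-\l X,Y\r$. Hence
\begin{equation*}
	\l\A(X,Y),F/2\r=-\tfrac12 g(X,Y),
\end{equation*}
and the sphere contributes the umbilic term
\begin{equation*}
	\A=A-\tfrac14\, g\otimes F.
\end{equation*}
Taking the trace with $n=2$ gives $\H=2H-\tfrac12 F=-\tfrac12 F$. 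This is consistent with \eqref{eq:ss-def}, because $F$ is normal to $M$: it lies in the normal direction of the sphere, as $|F|\equiv2$ forces $F^\perp=F$.

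Next compute
\begin{equation*}
	\tfrac1n g\otimes\H=\tfrac12 g\otimes\left(-\tfrac12F\right)=-\tfrac14 g\otimes F,
\end{equation*}
so that
\begin{equation*}
	\AA=\A-\tfrac12 g\otimes\H=A.
\end{equation*}
Therefore $|A|^2=|\AA|^2$. As a consistency check, \eqref{eq:ss-Acirc} gives $|\A|^2=|A|^2+\tfrac12|\H|^2=|A|^2+2$, which agrees with the direct computation $|\tfrac14 g\otimes F|^2=\tfrac1{16}\cdot2\cdot4=\tfrac12$. Here the cross term vanishes by orthogonality and $A$ is traceless.

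The main point requiring care is identifying the sphere's radius and center from Theorem \ref{thm:smoczyk}, namely that it is centered at the origin with radius $\sqrt{2n}=2$, so that $F$ is exactly the outward radial vector. Once that is in hand, the rest is the standard Gauss formula computation for the composite $M\subset\mathbb{S}^{1+p}(2)\subset\mathbb{R}^{2+p}$ and needs no further input.
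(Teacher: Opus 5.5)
Your proof is correct, but it takes a different route from the paper.

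The paper never decomposes $\A$. After Smoczyk's theorem gives $H\equiv0$, it:
\begin{enumerate}
\item compares the two Gauss equations for the intrinsic curvature, $\frac12+|H|^2-|A|^2=|\H|^2-|\A|^2$;
\item uses the self-shrinker equation with $F^\perp=F$ to get $|\H|^2=1$, so $|A|^2=|\A|^2-\frac12$;
\item matches this with $|\AA|^2=|\A|^2-\frac12|\H|^2=|\A|^2-\frac12$.
\end{enumerate}

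You instead write the Gauss formula for the composite immersion $M\subset\mathbb{S}^{1+p}(2)\subset\mathbb{R}^{2+p}$. This gives $\A=A-\frac14\,g\otimes F$, and after tracing you obtain the tensor identity $\AA=A$.

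What each route buys:
\begin{itemize}
\item The paper's argument is shorter and only manipulates scalar identities.
\item Yours proves more: the tensors themselves agree, not just their squared norms. It also obtains $\H=-\frac12F$ directly from $H\equiv0$ and the geometry of the round sphere centered at the origin. The self-shrinker equation then appears only as a consistency check rather than as an input.
\end{itemize}

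There is one arithmetic slip in your consistency check. Since $|\H|^2=\frac14|F|^2=1$, you have $\frac12|\H|^2=\frac12$. The line should therefore read $|\A|^2=|A|^2+\frac12$, not $|A|^2+2$. The value $\frac12$ that you compute directly from $|\frac14\,g\otimes F|^2$ is the correct one. The slip is not part of the logical chain and does not affect your proof.
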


\begin{proof}
	By Theorem \ref{thm:smoczyk}, $M$ is a minimal surface in $\mathbb{S}^{1+p}(2)\subset\mathbb{R}^{2+p}$, which yields that $H\equiv 0$. Let $\A$ and $\H$ be the Euclidean second fundamental form and Euclidean mean curvature vector of the self-shrinker. The Gauss equation gives that
	\begin{equation}\label{eq:ss-gauss-compare}
		\frac{1}{2}+|H|^2-|A|^2=|\H|^2-|\A|^2.
	\end{equation}
	Since $M\subset\mathbb{S}^{1+p}(2)$ and $F^{\perp}=F$ along the sphere, the self-shrinker equation \eqref{eq:ss-def} yields that
	\begin{equation*}
		\H=-\frac{1}{2}F\text{ and }|\H|^2=1.
	\end{equation*}
	Substituting $H\equiv 0$ and $|\H|^2=1$ into \eqref{eq:ss-gauss-compare}, we obtain that
	\begin{equation*}
		|A|^2=|\A|^2-\frac{1}{2}.
	\end{equation*}
	Now \eqref{eq:ss-Acirc} gives that
	\begin{equation*}
		|\AA|^2=|\A|^2-\frac{1}{2}|\H|^2=|\A|^2-\frac{1}{2},
	\end{equation*}
	therefore $|A|^2=|\AA|^2$, which completes the proof.
\end{proof}

We can now give the proof of Theorem \ref{thm:ss-three-gaps}.

\begin{proof}[\textbf{Proof of Theorem \ref{thm:ss-three-gaps}}]
	By Proposition \ref{prop:ss-conversion}, after passing from the closed self-shrinker to the corresponding minimal surface in $\mathbb{S}^{1+p}(2)$ and then rescaling to the unit sphere, the spherical squared norm of the second fundamental form is $$S=4|\AA|^2.$$
	\begin{enumerate}
		\item If $0\le|\AA|^2\le\frac{1}{3}$, then $$0\le S=4|\AA|^2\le \frac43.$$ By Theorem \ref{SCMmain} \eqref{ob1}, either $S\equiv 0$ or $S\equiv\frac{4}{3}$. Hence either $|\AA|^2\equiv 0$ or $|\AA|^2\equiv\frac{1}{3}$. If $S\equiv 0$, then the rescaled minimal surface in the unit sphere is totally geodesic; scaling back by the factor $2$ gives the round sphere $\mathbb{S}^2(2)\subset\mathbb{R}^3\subset\mathbb{R}^{2+p}$. If $S\equiv\frac{4}{3}$, then the rescaled minimal surface is the Veronese surface in $\mathbb{S}^4(1)$; scaling back by the factor $2$ gives the Veronese surface \[\mathbb{S}^2(2\sqrt3)\to \mathbb{S}^4(2)\subset\mathbb{R}^5\subset\mathbb{R}^{2+p}.\] This proves (1).
		\item If $\frac{1}{3}\le|\AA|^2\le\frac{5}{12}$, then $$\frac{4}{3}\le S=4|\AA|^2\le\frac{5}{3}.$$ By Theorem \ref{SCMmain} \eqref{ob2}, either $S\equiv\frac{4}{3}$ or $S\equiv\frac{5}{3}$. Therefore either $|\AA|^2\equiv\frac{1}{3}$ or $|\AA|^2\equiv\frac{5}{12}$. The case $S\equiv\frac{4}{3}$ gives again the Veronese surface. If $S\equiv \frac{5}{3}$, then the rescaled minimal surface is Calabi's $2$-sphere with curvature $\frac{1}{6}$ in the unit sphere; after scaling back by the factor $2$, it becomes Calabi's $2$-sphere $$\mathbb{S}^2(2\sqrt6)\to\mathbb{S}^6(2)\subset\mathbb{R}^7\subset\mathbb{R}^{2+p}.$$ This proves (2).
		\item If $\frac{5}{12}\le|\AA|^2\le 0.426875$, then $$\frac{5}{3}\le S=4|\AA|^2\le 1.7075.$$ Hence Theorem \ref{mainthm} \eqref{zuo} applies and gives $S\equiv\frac53$. Therefore $|\AA|^2\equiv\frac{5}{12}$, and the same scaling argument as above yields Calabi's $2$-sphere $$\mathbb{S}^2(2\sqrt6)\to \mathbb{S}^6(2)\subset\mathbb{R}^7\subset\mathbb{R}^{2+p},$$ which proves (3a). If $0.446325\le|\AA|^2\le\frac{9}{20}$, then $$1.7853\le S=4|\AA|^2\le\frac{9}{5}.$$ Therefore Theorem \ref{mainthm} \eqref{you} applies and gives $S\equiv\frac{9}{5}$. Hence $|\AA|^2\equiv\frac{9}{20}$, and after scaling back by the factor $2$ the corresponding minimal surface is Calabi's $2$-sphere $$\mathbb{S}^2(2\sqrt{10})\to\mathbb{S}^8(2)\subset\mathbb{R}^9\subset\mathbb{R}^{2+p},$$ which proves (3b). The proof of (3c) is similar.
	\end{enumerate}
	Therefore we complete the proof.
\end{proof}

\begin{acknow}
The authors would like to thank Y. H. Zhao for his valuable and constructive suggestions.
%The author is very grateful to Professor dd for their kindly encouragements and supports. 
\end{acknow}
%\begin{data}
%	No data was used for the research described in the article.
%\end{data}

\end{document}